\begin{document}
\frenchspacing
\righthyphenmin=2

\newtheorem{lm}{Lemma}
\newtheorem*{lm*}{Lemma}
\newtheorem{prop}{Proposition}
\newtheorem{tm}{Theorem}
\newtheorem*{tm*}{Theorem}
\newtheorem{cl}{Corollary}
\newtheorem*{cl*}{Corollary}
\theoremstyle{definition}
\newtheorem{df}{Definition}
\newtheorem{rem}{Remark}

\newcommand{\Hom}{\mathop{\mathrm{Hom}}\nolimits}
\newcommand{\Br}{\mathop{\mathrm{Br}}\nolimits}
\newcommand{\Ch}{\mathop{\mathrm{Ch}}\nolimits}
\newcommand{\Gm}{\mathop{\mathbb{G}_m}\nolimits}
\newcommand{\Ker}{\mathop{\mathrm{Ker}}\nolimits}
\newcommand{\Coker}{\mathop{\mathrm{Coker}}\nolimits}
\newcommand{\Vect}{\mathop{\mathrm{Vect}}\nolimits}
\newcommand{\Mor}{\mathop{\mathrm{Mor}}\nolimits}
\newcommand{\Rep}{\mathop{\mathrm{Rep}}\nolimits}
\newcommand{\GL}{\mathop{\mathrm{GL}}\nolimits}
\newcommand{\SL}{\mathop{\mathrm{SL}}\nolimits}
\newcommand{\Sp}{\mathop{\mathrm{Sp}}\nolimits}
\newcommand{\End}{\mathop{\mathrm{End}}\nolimits}
\newcommand{\Tor}{\mathop{\mathrm{Tor}}\nolimits}
\newcommand{\E}{\mathop{\mathrm{E}}\nolimits}
\newcommand{\F}{\mathop{\mathrm{F}}\nolimits}
\newcommand{\Hh}{\mathop{\mathrm{H}}\nolimits}
\newcommand{\K}{\mathop{\mathrm{K}}\nolimits}
\newcommand{\Z}{{\mathbb Z}}
\newcommand{\Q}{{\mathbb Q}}
\newcommand{\m}{{\mathfrak m}}
\newcommand{\ag}{{\mathfrak a}}
\newcommand{\Gal}{\mathop{\mathrm{Gal}}\nolimits}
\newcommand{\chr}{\mathop{\mathrm{char}}\nolimits}
\newcommand{\di}{\mathop{\mathrm{dim}}\nolimits}

\author{Maria Yakerson}

\address{University of Duisburg-Essen\\
Faculty of Mathematics\\
Thea-Leymann-Str. 9 45127\\
Essen\\
Germany}

\email{mura.yakerson@gmail.com}

\dedicatory{To Sasha Ivanov}

\title[K-theory of $\SL_{2n} / \Sp_{2n}$, $\E_6 / \F_4$ and their twisted forms]{Algebraic K-theory of varieties $\SL_{2n} / \Sp_{2n}$, $\E_6 / \F_4$ and their twisted forms}

\subjclass[2000]{Primary 19E08; Secondary 14M17}

\keywords{algebraic K-theory, affine homogeneous varieties}

\begin{abstract}
Let $\SL_{2n}$, $\Sp_{2n}$, $\E_6 = G^{sc}(\E_6)$, $\F_4 = G(\F_4)$ be simply connected split algebraic groups over an arbitrary field $F$. Algebraic \linebreak K-theory of affine homogeneous varieties $\SL_{2n} / \Sp_{2n}$ and $\E_6 / \F_4$ is computed. Moreover, explicit elements that generate $K_*(\SL_{2n} / \Sp_{2n})$ and $K_*(\E_6/\F_4)$ as $K_*(F)$-algebras are provided. For some twisted forms of these varieties K-theory is also computed.
\end{abstract}

\thanks{This work has been supported by the grant Sonderforschungsbereich Transregio 45}

\maketitle

\section*{Introduction}

Algebraic K-theory is already known for some classes of algebraic varieties. At first it was computed for Severi-Brauer varieties by D.\,Quillen \cite{quil} and for smooth projective quadrics by R.\,Swan \cite{swan}. Then M.\,Levine \cite{lev} computed the K-theory of split semisimple simply connected algebraic groups. I.\,Panin \cite{pan} generalized this computation for all semisimple simply connected algebraic  groups and computed the K-theory of flag varieties (see~\cite{panin}). Later A.\,Ananyevskiy \cite{an} computed the K-theory of homogeneous varieties $G/H$, where $H \subset G$ are connected reductive algebraic groups of the same rank. In all these cases K-theory turned out to be isomorphic to a sum of K-theories of some central semisimple algebras.

We provide a computation of the K-theory for affine homogeneous varieties $\SL_{2n} / \Sp_{2n}$ and $\E_6 / \F_4$. The computation is based on using the Merkurjev spectral sequence for the equivariant K-theory (see~\cite{mer}). The key point which allows us to accomplish the computation is the following fact: for the chosen varieties $G/H$ there is an epimorphism $i^* \colon R(G) \to R(H)$ on the rings of representations, and its kernel is generated by explicit elements. Here can be seen a big difference with the case of $G/H$ where $G$ and $H$ have the same rank. In that case A.\,Ananyevskiy has shown~\cite[Theorem 2]{an} that $R(H)$ is a free $R(G)$-module.

The following theorem is proved.

\begin{tm*}
\label{maintm}
There are isomorphisms of graded $K_*(F)$-modules:
$$
K_*(\SL_{2n}/\Sp_{2n}) \simeq K_*(F) \otimes \Lambda (\Z^{n-1});
$$
$$
K_*(\E_6/\F_4) \simeq K_*(F) \otimes \Lambda (\Z^2),
$$
where $\Lambda(\Z^m)$ is an exterior algebra considered with the natural grading. 
\end{tm*}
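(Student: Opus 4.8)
The plan is to compute $K_*(X)$ for $X = G/H$, where $(G,H)$ is $(\SL_{2n},\Sp_{2n})$ or $(\E_6,\F_4)$, by running Merkurjev's spectral sequence for the $G$-action on $X$. Its $E_2$-term is $E_2^{p,q} = \Tor_p^{R(G)}(\Z, K_q^G(X))$, converging to $K_{p+q}(X)$, where $\Z = R(G)/I_G$ is the trivial module via the rank homomorphism and $I_G$ the augmentation ideal. Since $X$ is affine homogeneous, $K_q^G(X) \cong K_q^H(\mathrm{pt})$; and as $H$ is split, $K_*^H(\mathrm{pt}) \cong R(H)\otimes_\Z K_*(F)$ with the $R(G)$-action induced by restriction along $i^*\colon R(G)\to R(H)$. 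Thus everything reduces to computing $\Tor_*^{R(G)}(\Z, R(H))$ as a bigraded ring, plus a convergence argument.

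For the $\Tor$-computation I would use the representation-theoretic input highlighted in the introduction: $i^*$ is surjective and $J := \Ker i^*$ is generated by explicit elements $f_1,\dots,f_c$. For $\SL_{2n}/\Sp_{2n}$ one has $R(\SL_{2n}) = \Z[\lambda_1,\dots,\lambda_{2n-1}]$ with $\lambda_i = [\Lambda^i(\mathrm{std})]$, restriction identifies $\lambda_i$ with $\lambda_{2n-i}$ (self-duality of the standard symplectic representation together with triviality of the determinant), and one takes $f_i = \lambda_i - \lambda_{2n-i}$, $c = n-1$; for $\E_6/\F_4$ one takes the two differences $f_1,f_2$ of the fundamental representations of $\E_6$ interchanged in pairs by the diagram automorphism, $c = 2$. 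In each case $R(G)/J \cong R(H)$ is again a polynomial $\Z$-algebra, of Krull dimension exactly $\di R(G) - c$; since $R(G)$ is Cohen--Macaulay this forces $f_1,\dots,f_c$ to be a regular sequence, so the Koszul complex $\mathrm{K}_\bullet(f_1,\dots,f_c)$ is a free $R(G)$-resolution of $R(H)$. Applying $\Z\otimes_{R(G)}-$ yields the Koszul complex over $\Z$ on the ranks $\bar f_i$; but $\bar f_i = 0$, because $f_i$ is a difference of two representations of equal dimension, so the differential vanishes and $\Tor_p^{R(G)}(\Z, R(H)) \cong \Lambda^p(\Z^c)$. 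As this is $\Z$-free, $E_2^{p,q} \cong \Lambda^p(\Z^c)\otimes_\Z K_q(F)$, with the exterior product.

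Degeneration at $E_2$ is then formal: the spectral sequence is multiplicative, and its $E_2$-algebra $\Lambda(\Z^c)\otimes_\Z K_*(F)$ is generated by the parts of $\Tor$-degree $0$ (namely $K_*(F)$) and of $\Tor$-degree $1$ (namely $\Z^c$). Every higher differential $d^r$, $r\ge 2$, lowers the $\Tor$-degree by $r$, hence sends both families of generators into groups of negative $\Tor$-degree, which vanish; being a derivation, $d^r$ vanishes identically, so $E_2 = E_\infty$. Consequently $K_m(X)$ carries a finite filtration with graded pieces $\Lambda^p(\Z^c)\otimes_\Z K_{m-p}(F)$. To split it, lift the $c$ generators of $E_\infty^{1,0} = \Lambda^1(\Z^c)$ to classes $y_1,\dots,y_c\in K_1(X)$ — explicitly, $f_i$ exhibits two $G$-representations becoming isomorphic as $H$-representations, hence two $G$-equivariant, non-equivariantly trivial bundles on $X$, and the automorphism of the trivial bundle comparing the two trivializations (built from the symplectic form and the $\SL$-trivialization of the determinant, resp.\ its $\E_6$-analogue) represents $y_i$. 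Multiplicativity identifies the class of a product $y_{i_1}\cdots y_{i_p}$ in $\mathrm{gr}_p K_p(X) = E_\infty^{p,0}$ with $e_{i_1}\wedge\cdots\wedge e_{i_p}$, so the $K_*(F)$-module homomorphism $K_*(F)\otimes_\Z\Lambda(\Z^c)\to K_*(X)$, $a\otimes(e_{i_1}\wedge\cdots\wedge e_{i_p})\mapsto a\cdot y_{i_1}\cdots y_{i_p}$, is an isomorphism on the finite associated graded, hence an isomorphism.

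The main obstacle is the input of the second paragraph: proving that $i^*$ is an epimorphism and that $\Ker i^*$ is exactly the ideal generated by the stated regular sequence, i.e.\ that $R(H)$ is the expected polynomial quotient of $R(G)$. For $\SL_{2n}\supset\Sp_{2n}$ this is a direct computation with exterior powers of the standard representation; for $\F_4\subset\E_6$ one must control the restriction of all the fundamental representations of $\E_6$, which is the delicate point. Once this, and the existence of the $K_1$-classes $y_i$, are in hand, the remaining steps are formal.
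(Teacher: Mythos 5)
Your overall strategy is the one the paper follows: Merkurjev's spectral sequence for the left $G$-action on $G/H$, the identification $K_q(G;G/H)\simeq R(H)\otimes K_q(F)$, the Koszul resolution of $R(H)$ over $R(G)$ on the regular sequence of differences of fundamental representations (your Cohen--Macaulay/height argument for regularity is a legitimate alternative to the paper's direct assertion), the conclusion $E^2_{p,q}=\Lambda^p(\Z^m)\otimes K_q(F)$, degeneration, and splitting of the resulting filtration by explicit classes in $K_1$ built from pairs of $G$-representations that become isomorphic over $H$.

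There is, however, one genuine gap in the degeneration step as you wrote it. You argue that ``the spectral sequence is multiplicative'' and that each $d^r$ is a derivation killing the multiplicative generators, hence zero. Full multiplicativity of the Merkurjev/Levine spectral sequence (a ring structure on every $E_r$-page with $d^r$ a derivation) is not established in the sources the paper relies on; what Levine provides is much weaker: a product $\smile_2$ defined only on the bottom row $\bigoplus_p E^2_{p,0}$, multiplicativity of the edge homomorphisms $g_i\colon K_i(G/H)\to E^2_{i,0}$ with respect to $\smile_2$, and a $K_*(F)$-module structure on the whole spectral sequence. The paper's route around this is: $d^r_{1,0}=0$ for trivial positional reasons, so $g_1$ is onto $E^2_{1,0}$; one then needs a separate lemma identifying $\smile_2$ on $\bigoplus_p\Tor_p^{R(G)}(\Z,R(H))$ with the exterior product (proved by localizing to reduce to Levine's local-ring computation) --- a point your write-up also assumes implicitly when you speak of ``the exterior product'' on $E_2$ and when you identify $y_{i_1}\cdots y_{i_p}$ with $e_{i_1}\wedge\cdots\wedge e_{i_p}$ in $E^\infty_{p,0}$. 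Since $\Lambda(\Z^m)$ is generated in degree one, surjectivity of $g_1$ plus multiplicativity of the edge maps forces all $g_p$ to be surjective, hence $E^2_{p,0}=E^\infty_{p,0}$ and all $d^r_{p,0}=0$; the $K_*(F)$-module structure then kills the differentials on the remaining rows. Your conclusion is correct, but you should replace the derivation argument by this edge-homomorphism argument (or else supply a proof of full multiplicativity). The final splitting via the associated graded is equivalent to the paper's graded Nakayama argument and is fine.
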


Moreover, we provide the elements $t_1, \dots, t_{n-1} \in K_1(\SL_{2n}/\Sp_{2n})$ and $s_1, s_2 \in K_1(\E_6/\F_4)$ that are multiplicative generators of $K_*(F)$-algebras $K_*(\SL_{2n}/\Sp_{2n})$ and $K_*(\E_6/\F_4)$ respectively. These elements are constructed similarly to those in topological 
K-theory of these varieties (see~\cite{mi}). The proof is based on M.\,Levine's computation \cite[Theorem 2.1]{lev} of multiplicative generators for $K_*(\SL_{2n})$ and $K_*(\E_6)$ as algebras over $K_*(F)$.

Explicitly constructed isomorphisms in the split case allow to compute K-theory of some twisted forms of these varieties using Panin's splitting principle \cite{pan}.

\begin{tm*}
\label{twisttm}
Assume $\chr(F) \ne 2$. Let $\gamma \colon \Gal(F^{sep} / F) \to (\Sp_{2n} / \mu_2)(F^{sep})$ be a 1-cocycle, $A = \End(V)$ where $V$ is a $2n$-dimensional vector space over $F$, and $\tau$ the standard symplectic involution on $A$. Denote $B_i$ the central simple algebra $A_{\gamma}$ for $i$ odd, and $F$ for $i$ even $(1 \leqslant i \leqslant n-1)$. Denote $B_I = B_{i_1} \otimes \dots \otimes B_{i_q}$ for every $I = \{i_1 < \dots < i_q \} \subseteq \{1, \dots, n-1 \}$.
Then the following graded $K_*(F)$-modules are isomorphic:
$$
K_*(\SL_{1, A_{\gamma}} / \Sp(A_{\gamma}, \tau_{\gamma}) ) \,\, \simeq \,\,   \bigoplus_{I \subseteq \{1, \dots, n-1\}} \! \! \! \! \! K_{*-|I|}(B_I).
$$
Let $\delta \colon \Gal(F^{sep} / F) \to \F_4(F^{sep})$ be a 1-cocycle. Then the following graded $K_*(F)$-modules are isomorphic:
$$
K_*((\E_6 / \F_4)_{\delta}) \,\, \simeq \,\, \bigoplus_{I \subseteq \{1, 2\}} \! \! \! K_{*-|I|}(F).
$$
\end{tm*}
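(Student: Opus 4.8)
The plan is to bootstrap from the explicit computation in the split case. In that case the main theorem gives multiplicative generators $t_1, \dots, t_{n-1} \in K_1(\SL_{2n}/\Sp_{2n})$ coming (via Levine's description) from the exterior powers $\Lambda^i$ of the standard representation of $\SL_{2n}$; similarly $s_1, s_2 \in K_1(\E_6/\F_4)$ come from fundamental representations of $\E_6$. The key structural fact I would isolate is that, after base change to $F^{sep}$, each $t_i$ lies in a copy of the reduced K-theory of a central simple algebra that is twisted in a controlled way by the cocycle. Concretely, I would first record how the Galois group acts on the free $K_*(F^{sep})$-module $K_*((\SL_{2n}/\Sp_{2n})_{F^{sep}})$: the class $t_i$ is built from $\Lambda^i V$, and a cocycle $\gamma$ valued in $\Sp_{2n}/\mu_2$ twists $V$ only up to the action of $\mu_2$, so $\Lambda^i V$ is genuinely twisted precisely when $i$ is odd (the even exterior powers descend, since $\mu_2$ acts trivially on them). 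This is exactly why $B_i = A_\gamma$ for $i$ odd and $B_i = F$ for $i$ even.

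Second, I would invoke Panin's splitting principle: for a twisted form $Y_\delta$ of a variety $Y$ whose split K-theory is a sum of K-theories of (split) central simple algebras with a known Galois action, $K_*(Y_\delta)$ is the sum of the K-theories of the corresponding twisted central simple algebras. Applying this to the basis of $K_*((\SL_{2n}/\Sp_{2n})_{F^{sep}})$ indexed by subsets $I \subseteq \{1,\dots,n-1\}$ — where the basis element $t_I = t_{i_1}\cdots t_{i_q}$ sits in degree $|I|$ and transforms under $\Gal(F^{sep}/F)$ according to $\Lambda^{i_1}V \otimes \cdots \otimes \Lambda^{i_q}V$, hence after untwisting according to the algebra $B_{i_1}\otimes\cdots\otimes B_{i_q} = B_I$ — yields the claimed decomposition $K_*(\SL_{1,A_\gamma}/\Sp(A_\gamma,\tau_\gamma)) \simeq \bigoplus_I K_{*-|I|}(B_I)$. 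The hypothesis $\chr(F)\ne 2$ is needed so that $\mu_2$ is étale and the cocycle description of the symplectic involution $\tau_\gamma$ on $A_\gamma$ behaves well. For $\E_6/\F_4$ the argument is formally the same but easier: the relevant fundamental representations of $\E_6$ that give $s_1, s_2$ are defined over the integers and a cocycle valued in $\F_4$ acts on them through the inclusion $\F_4 \hookrightarrow \E_6$, but the resulting twisted algebras turn out to be split (a Tits-algebra computation shows the relevant classes in $\Br(F)$ vanish because $\F_4$ is both simply connected and adjoint with trivial center), so every $B_I$ is just $F$ and one gets $\bigoplus_{I\subseteq\{1,2\}} K_{*-|I|}(F)$.

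The main obstacle I anticipate is making the Galois-equivariance of the split isomorphism genuinely explicit rather than abstract: one must check that the specific multiplicative generators $t_i$, $s_j$ — not merely some generators — are the images of classes in $K_0$ of the correct twisted algebras under the standard map $K_*(B) \to K_*(\mathrm{pt})$-module structure, so that Panin's principle can be applied term by term. This requires tracing through Levine's construction of the generators and checking compatibility with the functoriality of equivariant K-theory under the Merkurjev spectral sequence that was used in the split computation. A secondary technical point is identifying $\Sp(A_\gamma,\tau_\gamma)$ and $\SL_{1,A_\gamma}$ correctly as the twisted forms of $\Sp_{2n}$ and $\SL_{2n}$ cut out by $\gamma$ — i.e. verifying that the quotient $\SL_{1,A_\gamma}/\Sp(A_\gamma,\tau_\gamma)$ really is the $\gamma$-twist of $\SL_{2n}/\Sp_{2n}$ — which is a descent computation with the exact sequence $1 \to \mu_2 \to \Sp_{2n} \to \Sp_{2n}/\mu_2 \to 1$ and is where the assumption on the characteristic is used most essentially.
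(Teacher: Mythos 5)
Your proposal follows essentially the same route as the paper: compute the Brauer classes of the Tits algebras attached to the fundamental representations via the central characters (which is exactly why $B_i = A_{\gamma}$ for $i$ odd and $B_i = F$ for $i$ even, and why everything splits for $\F_4$ since its center is trivial), construct twisted versions of the generators over $F$, and invoke Panin's splitting principle to reduce to the already-proved split-case theorem. The one caution is that the splitting principle is not a Galois-descent statement for K-theory --- one must first build the comparison map over $F$ itself, which the paper does by observing that the morphisms $\widetilde\rho_i \colon G/H \to \GL(V_i)$ are $\overline{H}$-equivariant and hence can be twisted by $\gamma$ into classes $[\widetilde t_i] \in K_1(X_{\gamma}, A_{i,\gamma}^{op})$, and only then check that the resulting map becomes an isomorphism over every splitting field --- but since you explicitly flag this as the main obstacle, your approach is sound.
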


In section 1 we construct multiplicative generators of K-theory and introduce some notation. In section 2 we study the Merkurjev spectral sequence which is used in section 3 to compute the K-theories of the varieties in question as graded modules over the K-theory of a base field. In section 4 we compute the multiplicative generators of the K-theory and state the answer in the split case. In section 5 we state the problem for twisted forms of the varieties. Then in section 6 we describe how to twist the multiplicative generators with a cocycle. Finally, in section 7 we show how Panin's splitting principle helps to reduce the problem to the split case, which is already solved.

{\it Acknowledgement}. The author wishes to express her sincere gratitude to A.\,Ananyevskiy and I.\,Panin for stating the problem and for numerous discussions. The author is also grateful to V.\,Sosnilo, S.\,Sinchuk and 
A.\,Lavrenov for helpful comments on the subject of this paper.

\section{Construction of the generators for $K_1(G/H)$}
\label{construct}

\subsection{Representation rings of $\SL_{2n}$ and $\Sp_{2n}$, $\E_6$ and $\F_4$}
\label{fund_rep}

\begin{df}
Let $G$ be an algebraic group over a field $F$. The {\it representation ring} $R(G)$ of a group $G$ is the Grothendieck group of the category $\Rep_F(G)$ with multiplication induced by tensor product of representations.
\end{df}

Suppose we have a subgroup $\, i \colon H \hookrightarrow G$. Then restriction of representations induces the homomorphism $i^* \colon R(G) \to R(H)$.

\subsubsection*{$\SL_{2n}$ and $\Sp_{2n}$}
\label{rep1}

Denote the vector representation by $V$. Then for representation rings of the groups $\SL_{2n}$ and $\Sp_{2n}$ we have:
$$
R(\SL_{2n}) = \Z[V, \Lambda^2 V, \dots, \Lambda^{2n-1} V], \quad
R(\Sp_{2n}) = \Z[V, \Lambda^2 V, \dots, \Lambda^n V]. 
$$
The representations $\Lambda^k V$ and $\Lambda^{2n-k} V$ become isomorphic after restriction to $\Sp_{2n}$ for every $k = 1, \dots, n-1$. 
The homomorphism $i^* \colon R(\SL_{2n}) \to R(\Sp_{2n})$ is surjective.
The ideal $\Ker \, i^*$ is generated by elements ${\Lambda^{k} V - \Lambda^{2n-k} V}$, where $ k = 1, \dots, n-1.$

\subsubsection*{$\E_6$ and $\F_4$}
\label{rep2}

Let $\rho$ and $\rho^{\vee}$ be the 27-dimensional fundamental representations of $\E_6$, and let $\rho'$ be the 26-dimensional fundamental representation of $\F_4$. Then for representation rings of the groups $\E_6$ and $\F_4$ we have:
$$R(\E_6) = \Z[\rho, \rho^{\vee}, \Lambda^2 \rho, \Lambda^2 \rho^{\vee}, \Lambda^3 \rho, \mathrm{Ad}_{\E_6}] , \quad
R(\F_4) = \Z[\rho', \Lambda^2 \rho', \Lambda^3 \rho', \mathrm{Ad}_{\F_4}],$$
and $\Lambda^3 \rho \simeq \Lambda^3 \rho^{\vee}$.
The representations $\rho$ and $\rho^{\vee}$ become isomorphic after restriction to $F_4$. It is known  that $i^*(\rho) = i^*(\rho^{\vee}) = \rho' + 1 \,$; $\, i^*(\mathrm{Ad}_{\E_6}) = \rho' + \mathrm{Ad}_{\F_4}$ \cite[p.~298]{kay}. Hence the homomorphism $i^*$ is surjective.
The ideal $\Ker \, i^*$ is generated by elements $\rho - \rho^{\vee}$ and $\Lambda^2 \rho - \Lambda^2 \rho^{\vee}$.

\subsection{Construction}
\label{construction}

Suppose we have an affine homogeneous variety $G/H$. Assume there are two nonisomorphic representations of the group $G$ that are isomorphic when restricted to the subgroup $H$. 
In other words, there are homomorphisms $\phi, \psi \colon G \to \GL(F^k) $ and a matrix $\alpha \in \GL(F^k)$ such that
$
\phi(h) = \alpha^{-1} \psi(h) \alpha \, \, \, \, \, \forall h \in H.
$

Using these data we construct a well-defined map $\chi$ from $G/H$ to $\GL(F^k)$: 
$[g] \mapsto \phi(g) \alpha^{-1} \psi(g)^{-1} \alpha$. 
We identify $\Mor_{F}(G/H, \GL_k)$ with $\GL_k(F[G/H])$ and consider the composition: 
$$ 
\xymatrix{
 \GL_k(F[G/H]) \ar@{->>}[r] \ar@{^{(}->}[r] & \GL(F[G/H]) \ar@{->>}[r] & K_1(F[G/H]) \ar@{=}[r] & K_1(G/H). \\
}
$$
This way, the map $\chi$ gives us an element in $K_1(G/H)$. It is denoted by $\beta(\phi - \psi)$ and defined by the following formula:
$$
\beta(\phi - \psi) = \bigl[\,[g] \mapsto \phi(g) \alpha^{-1} \psi(g)^{-1} \alpha \bigr] \in  K_1(G/H); \quad  [g] \in G/H.
$$

\subsection{Application}
\label{elements}
Now we will provide some elements of $K_1(\SL_{2n}/\Sp_{2n})$ and $K_1(\E_6/\F_4)$, and later we will show that they are multiplicative ge\-nerators of $K_*(F)$-algebras $K_*(\SL_{2n}/\Sp_{2n})$ and $K_*(\E_6/\F_4)$. These varieties are affine as quotients of reductive groups by reductive subgroups (see~\cite{rich}), so we can apply here the construction described in \ref{construction}.

For the group $\SL_{2n}$ consider the vector representation $V$ and its exterior powers $\Lambda^{k} V$. 
For every $1 \leqslant k \leqslant n-1$ the representations $\Lambda^{k} V$ and $\Lambda^{2n-k} V$ are isomorphic when restricted to $\Sp_{2n}$ (see~\ref{rep1}). The corresponding elements of $K_1(\SL_{2n}/\Sp_{2n})$ are defined as follows: 
$$
t_k = \beta(\Lambda^{k} V - \Lambda^{2n-k} V), \, \, 1 \leqslant k \leqslant n-1.
$$

For the group $\E_6$ consider the fundamental representations $\rho$ and $\rho^{\vee}$, which are isomorphic when restricted to $\F_4$ (see~\ref{rep2}). Here are the desired elements of $K_1(\E_6/\F_4)$: 
$$
s_1 = \beta (\rho - \rho^{\vee}); \, \, \, s_2 = \beta (\Lambda^2 \rho - \Lambda^2 \rho^{\vee}).
$$

\subsection{Notation} 
\label{defs}
Here we introduce some notation that will be used later.
\begin{itemize}
\item $G/H$ (or $X$) --- both varieties $\SL_{2n} / \Sp_{2n}$ and $\E_6 / \F_4$;
\item $\rho_1, \dots, \rho_l$ --- fundamental representations of the group $G$;
\item $\{(\rho_{i_1}, \rho_{i_2})\}_{i=1}^m$ --- pairs of fundamental representations of $G$, that are isomorphic when restricted to $H$ ($m=n-1$ or $m=2$);
\item $\widehat{\rho_i} = \rho_{i_1} - \rho_{i_2}$ --- elements of $R(G)$ that generate $\Ker\,i^*$ (see~\ref{fund_rep}). 
\end{itemize}

\section{Merkurjev spectral sequence}
\label{spect}
The Merkurjev spectral sequence allows to express the K-theory of a variety $X$ equipped with an action of an algebraic group $G$ in terms of the $G$-equivariant K-theory of $X$ (see~\cite{mer},~\cite{merk}).

\begin{df}
Let $X$ be a variety equipped with an action of an algebraic group  $G$. The {\it $G$-equivariant K-theory} of $X$ is the K-theory of the category of $G$-equivariant vector bundles over $X$. It is denoted by $K_*(G; X)$.
\end{df}

For computing $K_*(G/H)$ as a $K_*(F)$-module we will need the following theorem of A.\,Merkurjev \cite[Theorem 4.3]{mer}.

\begin{tm*}[Merkurjev]
Let $G$ be a split reductive group such that $\pi_1(G)$ is torsion-free, and let $X$ be a $G$-scheme. Then there is a spectral sequence:
$$
E_{p, q}^2 = \Tor_p^{R(G)}(\Z, K_q(G; X)) \Longrightarrow K_{p+q}(X).
$$
\end{tm*}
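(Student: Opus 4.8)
The plan is to reduce the statement to a homological fact about the ring $R(G)$ by means of two inputs from equivariant K-theory: descent along a free action, and a base-change (Künneth) principle. Throughout, recall that $K_*(G;-)$ is a module over $R(G)=K_0(G;\mathrm{Spec}\, F)$, a representation $W$ acting by tensoring with the trivial equivariant bundle $W\otimes\mathcal O_X$; for split $G$ one moreover has $K_*(G;\mathrm{Spec}\, F)\cong R(G)\otimes_{\Z}K_*(F)$, so all the constructions below are compatible with the $K_*(F)$-module structures and the $\Tor$ over $R(G)$ carries $K_*(F)$ along.

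The geometric starting point is the identity
\[
K_*(X) \;\cong\; K_*(G;\, G\times X),
\]
where $G$ acts diagonally on $G\times X$. This action is free with quotient $X$ (send $(g,x)$ to $g^{-1}x$), and for a free action the category of equivariant vector bundles is equivalent to the category of bundles on the quotient, so equivariant and ordinary K-theory coincide. Applying this with $X=\mathrm{Spec}\, F$ gives in particular $K_*(G;G)\cong K_*(F)$; tracing through the descent equivalence, the bundle $W\otimes\mathcal O_G$ descends to the trivial bundle of rank $\di W$, so the $R(G)$-module structure on this copy of $K_*(F)$ is the one obtained via the augmentation (rank) homomorphism $\varepsilon\colon R(G)\to\Z$.

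The second input is a derived Künneth formula along the structure map to $\mathrm{Spec}\, F$: since $G\times X=G\times_{\mathrm{Spec}\, F}X$, one expects
\[
K_*(G;\, G\times X)\;\cong\;K_*(G;G)\ \otimes^{\mathbb{L}}_{\,K_*(G;\,\mathrm{Spec}\, F)}\ K_*(G;X).
\]
Feeding in the previous paragraph — $K_*(G;G)=K_*(F)$ with $R(G)$ acting through $\varepsilon$ — this identifies the complex computing $K_*(X)$ with $\Z\otimes^{\mathbb{L}}_{R(G)}K_*(G;X)$, and the universal-coefficient (hyper-$\Tor$) spectral sequence of this derived tensor product is exactly
\[
E^2_{p,q}=\Tor^{R(G)}_p\big(\Z,\,K_q(G;X)\big)\Longrightarrow K_{p+q}(X).
\]

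It remains to see why the hypotheses make everything work, and this is also where the main difficulty lies. Torsion-freeness of $\pi_1(G)$ is exactly the condition under which (Steinberg) $R(G)=\Z[\rho_1,\dots,\rho_l]$ is a polynomial ring on the fundamental representations, $l=\mathrm{rk}\,G$; hence $\Z=R(G)/(\rho_1-\di\rho_1,\dots,\rho_l-\di\rho_l)$ is the quotient by a regular sequence and admits a finite free (Koszul) resolution of length $l$. This guarantees that the spectral sequence lives in the strip $0\leqslant p\leqslant l$ and converges strongly; and the regularity of $R(G)$, equivalently the perfectness of $\Z$ as an $R(G)$-module, is precisely what is needed to justify the Künneth step. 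That Künneth/base-change statement — that $K_*(G;\,G\times_{\mathrm{Spec}\, F}X)$ is computed by the derived tensor product over $K_*(G;\mathrm{Spec}\, F)$ — is the real work: one must pass to perfect complexes in the sense of Thomason's equivariant K-theory, verify the relevant exactness and the perfectness of $\mathrm{Perf}^G(G)$ over $\mathrm{Perf}^G(\mathrm{Spec}\, F)$, and then invoke the universal-coefficient spectral sequence for the K-theory of a tensor product of module categories. By contrast, the free-action descent, the identification of the $R(G)$-action on $K_*(G;G)$, and the Koszul bookkeeping are routine.
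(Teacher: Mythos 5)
This statement is not proved in the paper at all: it is quoted as an external input with a citation to Merkurjev \cite{mer}, Theorem 4.3, so there is no internal proof to compare your sketch against. Judged on its own terms, your outline does reproduce the architecture of Merkurjev's actual argument: the descent isomorphism $K_*(X)\simeq K_*(G;G\times X)$ for the free diagonal action, the identification of $K_*(G;G)$ with $K_*(F)$ carrying the $R(G)$-action through the rank augmentation, and the extraction of a $\Tor$-spectral sequence from a finite free resolution of $\Z$ over $R(G)$. That much is a faithful skeleton of how the theorem is actually established.

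Two caveats. First, the step you yourself flag as ``the real work'' --- the derived K\"unneth identification of $K_*(G;G\times X)$ with $\Z\otimes^{\mathbb{L}}_{R(G)}K_*(G;X)$ --- is essentially the whole content of the theorem; naming it and deferring it leaves the sketch at the level of a plausibility argument. In Merkurjev's construction this is not invoked as an abstract K\"unneth principle for module categories but built by hand via the Levine spectral sequence: one realizes a finite free $R(G)$-resolution of $\Z$ by a complex of $G$-equivariant bundles whose pullback along $G\times X\to X$ becomes exact, and the spectral sequence arises from the associated filtration. Second, your justification of finiteness is wrong as stated: torsion-freeness of $\pi_1(G)$ does \emph{not} make $R(G)$ a polynomial ring on fundamental representations --- that holds for simply connected semisimple $G$, whereas already $R(\GL_n)$ and the representation ring of a split torus are Laurent-polynomial in nature. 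The hypothesis is used to guarantee (via the Pittie--Steinberg circle of results) that the augmentation module $\Z$ is perfect over $R(G)$, i.e., admits a finite resolution by finitely generated free $R(G)$-modules; that weaker statement is what the construction needs. For the two groups to which the present paper applies the theorem ($\SL_{2n}$ and $\E_6$, both simply connected) your polynomial-ring claim does hold, and the resulting Koszul resolution is exactly the one written out in Section 2.
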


Since both groups $G = \SL_{2n}$ and $G = \E_6$ are simple and simply connected, their fundamental groups are trivial \cite[Cor.~1.3]{mer}. 
Applying this theorem to the case of the variety $G/H$, on which the group $G$ acts by left translation, we get the following spectral sequence:
$$
E_{p, q}^2 = \Tor_p^{R(G)}(\Z, K_q(G; G/H)) \Longrightarrow K_{p+q}(G/H).
$$
Let us compute the terms of its second page.
\subsection{Computation of $E_{p, q}^2$}
\label{second_term}

\begin{lm}
\label{equiv}
$K_i(G; G/H) \simeq R(H) \otimes K_i(F)$ as $R(H)$-modules.
\end{lm}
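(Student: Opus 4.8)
The statement $K_i(G;G/H)\simeq R(H)\otimes K_i(F)$ is the equivariant analogue of the fact that, for a closed subgroup $H\subseteq G$, the category of $G$-equivariant vector bundles on $G/H$ is equivalent to the category of $H$-representations over $F$. So the first step is to establish this equivalence of categories. Given a $G$-equivariant vector bundle $E$ on $G/H$, I would restrict it to the base point $[e]\in G/H$ whose stabilizer is exactly $H$; the fiber $E_{[e]}$ then carries a linear $H$-action, i.e. is an object of $\Rep_F(H)$. Conversely, given an $H$-representation $W$, one forms the associated bundle $G\times^H W\to G/H$, which is $G$-equivariant. These two constructions are mutually inverse (up to natural isomorphism), exact, and compatible with the symmetric monoidal structures, so they induce an equivalence of exact categories $\mathrm{Vect}_G(G/H)\simeq\Rep_F(H)$.

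**Key steps.** Second, I would invoke the fact that K-theory is an invariant of exact categories: an equivalence of exact categories induces an isomorphism on all $K$-groups. Hence $K_i(G;G/H)\simeq K_i(\Rep_F(H))$. Third, I would identify $K_i(\Rep_F(H))$: since $G$ is split reductive and $H$ is reductive, $\Rep_F(H)$ is a semisimple-like category whose $K$-theory is computed by the standard dévissage/localization machinery. More precisely, for a split reductive (here $H=\Sp_{2n}$ is split; for $H=\F_4$ it is split) group $H$ over $F$, one has $K_i(\Rep_F(H))\simeq R(H)\otimes_{\Z}K_i(F)$ — this is exactly the statement that the $K$-theory of the category of $H$-representations is free over $R(H)=K_0(\Rep_F(H))$ on the coefficients $K_i(F)$. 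This is precisely the kind of computation that underlies Merkurjev's and Panin's work (it is the $X=\mathrm{pt}$ case: $K_i(G;\mathrm{pt})\simeq R(G)\otimes K_i(F)$), and I would cite the relevant statement from \cite{mer} or \cite{merk} rather than reprove it. Assembling steps two and three gives the claimed isomorphism, and the $R(H)$-module structure on the left-hand side — coming from the tensor product of a representation with an equivariant bundle — matches the evident $R(H)$-module structure on $R(H)\otimes K_i(F)$ under the equivalence, because the equivalence of categories is monoidal.

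**Main obstacle.** The conceptual content is entirely in the equivalence $\mathrm{Vect}_G(G/H)\simeq\Rep_F(H)$; once that is in hand the rest is citation. The technical point to be careful about is that we are working over an arbitrary field $F$ (not algebraically closed), so "$G$-equivariant vector bundle on $G/H$" and "$H$-representation over $F$" must both be understood in the appropriate scheme-theoretic / fppf sense, and the associated-bundle construction $G\times^H(-)$ must be checked to land in genuine vector bundles (faithfully flat descent along $G\to G/H$, which is an $H$-torsor fppf-locally). I expect this descent argument — verifying that $G\times^HW$ is Zariski-locally trivial and that the two functors are quasi-inverse as exact monoidal functors — to be the one genuinely substantive step; everything downstream is the already-cited computation $K_i(\Rep_F(H))\simeq R(H)\otimes K_i(F)$ applied to the split reductive group $H$.
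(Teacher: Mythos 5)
Your proposal is correct and follows essentially the same route as the paper, which simply cites \cite[Lemma 9]{an} and notes that the proof rests on the equivalence $\Vect^G(G/H) \simeq \Rep(H)$ from \cite[Example~2]{merk} --- exactly the equivalence you construct via restriction to the base point and the associated-bundle functor $G\times^H(-)$, followed by the identification $K_i(\Rep_F(H))\simeq R(H)\otimes K_i(F)$. You have merely unpacked the details that the paper delegates to its references.
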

\begin{proof}
This statement is proved in~\cite[Lemma 9]{an}. The proof is based on the fact that the categories $\Vect^G(G/H)$ and $\Rep(H)$ are equivalent \cite[Example~2]{merk}.
\end{proof}

Therefore we need to compute the second page of the following spectral sequence:
 $$E_{p, q}^2 = \Tor_p^{R(G)}(\Z, R(H) \otimes K_q(F)) \Longrightarrow K_{p+q}(G/H).$$
At first, we will treat the case $q=0$.
 
\subsubsection*{Computation of $\Tor_p^{R(G)}(\Z, R(H))$}

First let us notice that $\Z$ is considered as an $R(G)$-module by means of the dimension homomorphism $R(G) \to \Z$, and $R(H)$ becomes an $R(G)$-module by means of the homomorphism $i^* \colon R(G) \to R(H)$. Recall that for both considered varieties $G/H$ the homomorphism $i^*$ is surjective (see~\ref{fund_rep}).

We can see that the sequence $(\widehat{\rho_1}, \dots, \widehat{\rho_m})$ is regular in $R(G)$. Hence we can write the corresponding Koszul resolution $\K_{\bullet} \to R(H)$, consisting of free $R(G)$-modules:
$$
\xymatrix{
\Lambda^m(R(G)^m) \ar@{^{(}->}[r]^{\qquad d_m} & \dots \ar[r] & \Lambda^2(R(G)^m) \ar[r]^{~d_2} & R(G)^m \ar[r]^{~d_1} & 
R(G) \ar@{->>}[r]^{~i^*} & R(H) \\
}
$$
Let $e_i$ generate $R(G)^m$ as a free $R(G)$-module ($i=1 \dots m$), then the differentials are defined the following way:
 $d_1 \colon e_i \mapsto \widehat{\rho_i}$; 
$ \quad d_2 \colon e_i \wedge e_j \mapsto \linebreak \widehat{\rho_i} \cdot e_j - \widehat{\rho_j} \cdot e_i; \,\, $ etc.

Consider the isomorphism $ R(H) \otimes_{R(G)} \Z \xrightarrow{\sim} \Z$:
 $ \quad \rho \otimes n \mapsto \di(\rho) \cdot n$. Let us multiply the resolution $\K_{\bullet}$ termwise by $\Z$ and apply this isomorphism:
$$
 - \otimes_{R(G)} \Z \colon \qquad \Lambda^m(\Z^m) \to \dots  \to \Lambda^2(\Z^m) \to \Z^m \to \Z \to \Z \to 0
$$
All the differentials will become zero because $\di (\rho_{i_1}) = \di (\rho_{i_2})$, and so $\di(\widehat{\rho_i})=0$ for every $i$.

As a result, we get the formula:
\begin{equation}
\label{comp_tor}
\Tor_p^{R(G)}(\Z, R(H)) =\Hh_p(\K_{\bullet} \otimes_{R(G)} \Z) = \Lambda^p(\Z^m).
\end{equation}

\subsubsection*{Final presentation of $E_{p, q}^2$}
   
To finish the computation of $E_{p, q}^2$ we will need the following lemma.

\begin{lm}
\label{tor3}
$\Tor_p^{R(G)}(\Z, R(H) \otimes K_i(F)) = \Tor_p^{R(G)}(\Z, R(H)) \otimes K_i(F)$ for every $i \geqslant 0$.
\end{lm}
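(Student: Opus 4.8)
The plan is to reduce the statement to the universal coefficient theorem over the principal ideal domain $\Z$, the point being that the groups $\Tor_p^{R(G)}(\Z, R(H))$ have already been computed in \eqref{comp_tor} and happen to be free abelian. Throughout, $\otimes$ without a subscript means $\otimes_{\Z}$, and $K_i(F)$ is regarded as a plain abelian group; the $R(G)$-module structure on $R(H) \otimes K_i(F)$ is the one coming from $i^* \colon R(G) \to R(H)$ acting on the first factor. First I would fix a resolution $P_\bullet \to \Z$ of $\Z$ by free $R(G)$-modules (for instance the Koszul complex on the regular sequence $\rho_1 - \di(\rho_1), \dots, \rho_l - \di(\rho_l)$ generating the augmentation ideal of the polynomial ring $R(G)$; any free resolution works). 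Since $\Tor$ may be computed by resolving either argument, for every $R(G)$-module $M$ one has $\Tor_p^{R(G)}(\Z, M) = \Hh_p(P_\bullet \otimes_{R(G)} M)$.

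Next I would set up the key identification. Taking $M = R(H)$, the complex $\mathcal{C}_\bullet := P_\bullet \otimes_{R(G)} R(H)$ satisfies $\Hh_p(\mathcal{C}_\bullet) = \Tor_p^{R(G)}(\Z, R(H)) = \Lambda^p(\Z^m)$ by \eqref{comp_tor}; moreover each term $\mathcal{C}_p = R(G)^{r_p} \otimes_{R(G)} R(H) = R(H)^{r_p}$ is a free $R(H)$-module, and since $R(H)$ is a polynomial ring over $\Z$ it is free over $\Z$, so $\mathcal{C}_\bullet$ is a complex of free $\Z$-modules. Taking instead $M = R(H) \otimes K_i(F)$ and using associativity of the tensor product (the $R(G)$-action factoring through $R(H)$ makes $R(H)$ an $(R(G), \Z)$-bimodule), there is a natural isomorphism of complexes $P_\bullet \otimes_{R(G)} (R(H) \otimes K_i(F)) \cong \mathcal{C}_\bullet \otimes K_i(F)$, whence $\Tor_p^{R(G)}(\Z, R(H) \otimes K_i(F)) = \Hh_p(\mathcal{C}_\bullet \otimes K_i(F))$.

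Finally I would invoke the universal coefficient theorem for the complex $\mathcal{C}_\bullet$ of free $\Z$-modules, which gives a short exact sequence
$$
0 \to \Hh_p(\mathcal{C}_\bullet) \otimes K_i(F) \to \Hh_p(\mathcal{C}_\bullet \otimes K_i(F)) \to \Tor_1^{\Z}\bigl(\Hh_{p-1}(\mathcal{C}_\bullet), K_i(F)\bigr) \to 0 .
$$
The rightmost term vanishes because $\Hh_{p-1}(\mathcal{C}_\bullet) = \Lambda^{p-1}(\Z^m)$ is a free $\Z$-module, so $\Hh_p(\mathcal{C}_\bullet \otimes K_i(F)) \cong \Hh_p(\mathcal{C}_\bullet) \otimes K_i(F)$; combined with the previous paragraph this is exactly the asserted isomorphism $\Tor_p^{R(G)}(\Z, R(H) \otimes K_i(F)) \cong \Tor_p^{R(G)}(\Z, R(H)) \otimes K_i(F)$. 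The argument is essentially formal, and there is no real obstacle: the only thing one must check is that no $\Tor_1^{\Z}$ correction term survives, which is guaranteed precisely by the freeness of $\Tor_*^{R(G)}(\Z, R(H))$ established in \eqref{comp_tor}. One could equally tensor the Koszul resolution $\K_\bullet$ of $R(H)$ from the excerpt with $K_i(F)$ over $\Z$ — it stays exact, since $R(G)$ and all its syzygies are free over $\Z$ — but then one must additionally verify that the resulting terms $R(G)^{r}\otimes K_i(F)$ are acyclic for $\Z \otimes_{R(G)} -$; resolving $\Z$ instead avoids that check.
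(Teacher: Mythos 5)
Your proof is correct, and it rests on exactly the same two inputs as the paper's argument --- that $R(H)$ is free as a $\Z$-module and that $\Tor_q^{R(G)}(\Z, R(H)) = \Lambda^q(\Z^m)$ is free as a $\Z$-module by (\ref{comp_tor}) --- but it packages them differently. The paper starts from the same associativity identity $(\Z \otimes_{R(G)} R(H)) \otimes K_i(F) = \Z \otimes_{R(G)} (R(H) \otimes K_i(F))$ and compares the two spectral sequences converging to the triple $\Tor_{p+q}(\Z, R(H), K_i(F))$: the first, $\widetilde E_2^{p,q} = \Tor_p^{\Z}(\Tor_q^{R(G)}(\Z,R(H)), K_i(F))$, degenerates because $\Tor_q^{R(G)}(\Z,R(H))$ is $\Z$-free, and the second, $\widehat E_2^{p,q} = \Tor_p^{R(G)}(\Z, \Tor_q^{\Z}(R(H),K_i(F)))$, degenerates because $R(H)$ is $\Z$-free; equating the surviving edge terms gives the lemma. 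You instead resolve $\Z$ over $R(G)$, identify $P_\bullet \otimes_{R(G)} (R(H) \otimes K_i(F))$ with $\mathcal{C}_\bullet \otimes K_i(F)$ at the chain level, and invoke the universal coefficient theorem over $\Z$: the $\Z$-freeness of the terms of $\mathcal{C}_\bullet$ (i.e.\ of $R(H)$) is what makes the UCT applicable, playing the role of the degeneration of $\widehat E$, while the vanishing of the $\Tor_1^{\Z}$ correction term, forced by the freeness of $\Hh_{p-1}(\mathcal{C}_\bullet) = \Lambda^{p-1}(\Z^m)$, plays the role of the degeneration of $\widetilde E$. Your route is somewhat more elementary (no hyper-$\Tor$ spectral sequences, only the UCT for a complex of free abelian groups) and makes visible exactly which freeness hypothesis kills the potential correction term; the paper's route is more symmetric and avoids choosing a resolution. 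Your closing remark is also accurate: tensoring the Koszul resolution $\K_\bullet$ with $K_i(F)$ does work, but only after checking that the resulting terms $R(G)^r \otimes K_i(F)$ are acyclic for $\Z \otimes_{R(G)} -$, which resolving $\Z$ sidesteps.
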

\begin{proof}
Because of associativity of tensor product we have:
$$(\Z \otimes_{R(G)} R(H)) \otimes K_i(F) = \Z \otimes_{R(G)} (R(H) \otimes K_i(F)).$$

This implies the existence of two spectral sequences that converge to triple $\Tor$:
$$
 \widetilde E_2^{p, q} = \Tor_p^{\Z}(\Tor_q^{R(G)}(\Z, R(H)), K_i(F)) \Longrightarrow \Tor_{p+q}(\Z, R(H), K_i(F)) ,
$$
$$
\widehat E_2^{p, q} = \Tor_p^{R(G)}(\Z, \Tor_q^{\Z}(R(H), K_i(F))) \Longrightarrow \Tor_{p+q}(\Z, R(H), K_i(F)) .
$$

Observe that:
 $\widetilde E_2^{p, q} = 0$ for $p \not = 0$ because $\Tor_q^{R(G)}(\Z, R(H))$ is a free $\Z$-module
(see~(\ref{comp_tor})); 
 $\widehat E_2^{p, q} = 0$ for $q \not = 0$ because $R(H)$ is a free $\Z$-module.
Therefore both spectral sequences degenerate on the second page and $\widetilde E_2^{0, p} = \widehat E_2^{p, 0}$, which is indeed the statement of the lemma.

\end{proof}

Lemma \ref{equiv}, formula (\ref{comp_tor}) and Lemma \ref{tor3} imply that the Merkurjev spectral sequence for the varieties $G/H = \SL_{2n} / \Sp_{2n}$ and $G/H = \E_6 / \F_4$ looks this way:
$$
E_{p, q}^2 = \Lambda^p(\Z^m) \otimes K_q(F) \Longrightarrow K_{p+q}(G/H),
$$
where $m = rk (G) - rk (H)$.
The spectral sequence is first-quadrant, its differential $d_{p,q}^2$ acts from $E_{p,q}^2$ to $E_{p-2,q+1}^2$.

\subsection{Degeneration of $E_{p,q}^*$}
The Merkurjev spectral sequence is a special case of the Levine spectral sequence \cite[3.1]{mer}. There is a multiplicative structure on the zero row of the second page of the Levine spectral sequence which is denoted by $\smile_2$ \cite[Section~1]{lev}. To check that the spectral sequence
$E_{p,q}^*$ degenerates we will need the following technical lemma.

\begin{lm}
\label{multi}
The multiplicative structure $\smile_2$ on $\bigoplus_p E_{p, 0}^2$ coincides with the natural product on $\bigoplus_p \Lambda^p(\Z^m)$.
\end{lm}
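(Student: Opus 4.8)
The plan is to identify the pairing $\smile_2$ on the zeroth row of the $E^2$-page with the canonical multiplication on the $\Tor$-algebra $\Tor_*^{R(G)}(\Z, R(H))$, and then to compute the latter explicitly via the Koszul resolution. By the construction of the multiplicative structure in the Levine spectral sequence \cite[Section~1]{lev} (of which the Merkurjev spectral sequence is a special case, see \cite[3.1]{mer}), the product $\smile_2$ on $\bigoplus_p E_{p,0}^2 = \bigoplus_p \Tor_p^{R(G)}(\Z, K_0(G;G/H))$ is the one induced by the ring structures on $\Z$ and on $K_0(G;G/H) = R(H)$ (the latter identification coming from Lemma~\ref{equiv}). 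Thus $\smile_2$ is nothing but the usual internal product on $\Tor_*^{R(G)}(\Z, R(H))$ --- the product arising from the diagonal, equivalently from the shuffle map on resolutions --- which in particular is independent of the resolution used to compute $\Tor$.

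I would then compute this product using the Koszul resolution $\K_\bullet = \Lambda^\bullet_{R(G)}(R(G)^m) \to R(H)$. The point is that $\K_\bullet$ is not merely a complex of free $R(G)$-modules but a commutative differential graded $R(G)$-algebra, with multiplication the exterior product of the generators $e_i$, augmentation $i^*$, and differential the Koszul differential, $d_1(e_i) = \widehat{\rho_i}$. Since a commutative DG-algebra resolution computes the $\Tor$-algebra together with its multiplication, $\smile_2$ is computed as the product on $\Hh_*(\K_\bullet \otimes_{R(G)} \Z)$. As already noted in the computation of $E_{p,q}^2$, the complex $\K_\bullet \otimes_{R(G)} \Z$ is the exterior algebra $\Lambda^\bullet_\Z(\Z^m)$ with differential $d(e_i) = \di(\widehat{\rho_i}) = 0$; that is, it is $\Lambda^\bullet_\Z(\Z^m)$ equipped with the zero differential and its ordinary exterior product. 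Its homology is therefore $\Lambda^\bullet_\Z(\Z^m)$ itself, with the natural exterior multiplication, and this yields the claimed identification of $\smile_2$ with the natural product on $\bigoplus_p \Lambda^p(\Z^m)$.

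The main obstacle I expect is the first step: unwinding Levine's definition of the product on the second page carefully enough to see that on the bottom row it is precisely the standard $\Tor$-multiplication, and then tracking the Koszul signs so that one really lands on the exterior algebra product rather than a sign-twisted variant of it. Once that bookkeeping is in place, the remaining steps are just the standard fact that a commutative DG-algebra resolution computes $\Tor$ as an algebra, specialised to the Koszul complex, together with the already-established vanishing of the differentials in $\K_\bullet \otimes_{R(G)} \Z$.
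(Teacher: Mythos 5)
The second half of your plan --- that the Koszul complex is a commutative DG-algebra resolution of $R(H)$ over $R(G)$, so the standard internal product on $\Tor_*^{R(G)}(\Z, R(H))$ is computed by the exterior product on $\Hh_*(\K_\bullet \otimes_{R(G)} \Z) = \Lambda^*(\Z^m)$ --- is correct, but it is the routine half. The substance of the lemma sits entirely in your first step, which you assert (``by the construction of the multiplicative structure in the Levine spectral sequence\dots'') and then immediately flag as ``the main obstacle'': namely, that $\smile_2$ on the bottom row agrees with the standard $\Tor$-multiplication. This identification does not fall out of the construction for free. Levine's $\smile_2$ is defined via the multiplicative structure on his K-theoretic filtration, and he only verifies agreement with the natural product under specific hypotheses: \cite[Example 1.1]{lev} treats $\Tor_p^R(R/(x_1,\dots,x_n), R/B)$ for $R$ a \emph{local} ring and $(x_1,\dots,x_n)$ a regular sequence in its maximal ideal. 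If the agreement were automatic from the definition, no such hypotheses would be needed. As written, your proposal therefore defers exactly the non-trivial point and proves only what remains after it.

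For comparison, the paper closes this gap not by unwinding Levine's definition but by reducing to the quoted special case: it writes $\oplus_p E_{p,0}^2 = \oplus_p \Tor_p^{R(G)}(R(G)/(\rho_1,\dots,\rho_l), R(G)/\Ker i^*)$, observes that the natural product on $\oplus_p \Lambda^p(\Z^m)$ is detected after the injection into $\oplus_p \Lambda^p(\Q^m)$, and uses the identity $\Tor_p^R(R/\ag, R/J) = \Tor_p^{R_{\ag}}(R_{\ag}/(\ag \cdot R_{\ag}), R_{\ag}/J_{\ag})$ to pass to a localization in which the relevant ideal is maximal, so that \cite[Example 1.1]{lev} applies verbatim. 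To make your argument complete you would either have to carry out the unwinding of Levine's definition that you postpone, or substitute this localization step and citation for your first paragraph; your Koszul DG-algebra computation can then stay as the (correct) remainder of the proof.
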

\begin{proof}
The following statement is true \cite[Example 1.1]{lev}: let $R$ be a local ring, $\m$ its maximal ideal, $x_1, \dots, x_n$ a regular sequence in $\m$ and $B$ an ideal in $R$. Then the multiplicative structure $\smile_2$ on $\oplus_p \Tor_p^R(R/(x_1, \dots, x_n), R/B)$ coincides with the natural product.

Under the conditions of the lemma we need to show that the two products on $\oplus_p \Lambda^p(\Z^m) = \oplus_p \Tor_p^{R(G)}(\Z, R(H))$ coincide. Let us reduce this case to the proposition stated above.

Observe that the sequence $(\rho_1, \dots, \rho_l)$ is regular in $R(G)$ and that $\Z = R(G) / (\rho_1, \dots, \rho_l )$. Recall that for our varieties $R(H) = R(G) / I$ where $I = \Ker\,i^*$. Thus we get: 
$$\oplus_p E_{p, 0}^2 = \oplus_p \Tor_p^{R(G)}(R(G)/(\rho_1, \dots, \rho_l), R(G)/I).$$

A product on $\oplus_p \Lambda^p(\Z^m)$ admits a natural extension by applying the localization homomorphism $\oplus_p \Lambda^p(\Z^m) \hookrightarrow \oplus_p \Lambda^p(\Q^m)$. Passing to the localization allows to consider the graded ring $\oplus_p \Tor_p^R (R/\ag, R / J)$ in which the ideal $\ag$ is already maximal. By means of the identity
$\Tor_p^R (R/\ag, R / J) =  \linebreak \Tor_p^{R_{\ag}} (R_{\ag} / (\ag \cdot R_{\ag}), R_{\ag} / J_{\ag})$ the statement can be reduced to the case of a local ring $R$.
\end{proof}

Let us consider the edge homomorphisms $g_i \colon K_i(G/H) \to E_{i, 0}^2 = \Lambda^i(\Z^m)$. Since the differentials $d_{1, 0}^r$ are zero for every $r \geqslant 2$, we see that $E_{1, 0}^{\infty} = E_{1, 0}^2$ hence $g_1$ is surjective. The edge homomorphism is multiplicative with respect to the product $\smile_2$ \cite[Prop.~1.3]{lev}, i.e., $g_i(a) \smile_2 g_j(b) = g_{i+j}(a \cup b)$. It follows from Lemma \ref{multi} that the edge homomorphism is multiplicative with respect to the natural product on $\Lambda(\Z^m)$. The algebra $\Lambda(\Z^m)$ is generated by the component $\Lambda^1(\Z^m)$, thus surjectivity of $g_1$ implies that $g_i$ are surjective for every $i$. It follows from the surjectivity of the homomorphisms $g_i$ that $E_{i, 0}^2 = E_{i, 0}^{\infty}$. Therefore all the differentials $d_{p, 0}^r$ are zero for every $r \geqslant 2$. 

The Levine spectral sequence is a module over $K_*(F)$ \cite[Lemma~1.2]{lev}. Since 
$E_{p,q}^2 = E_{p,0}^2 \otimes K_q(F)$ and $d_{p, 0}^2 = 0$, all the differentials on the second page are zero. Using the facts that $d_{p, 0}^r = 0$ and that $E_{p,q}^r$ is a $K_*(F)$-module for every $r \geqslant 2$, we get that the differentials are zero on the higher pages also. As a result we see that the spectral sequence degenerates at the second page.

\begin{cl}
\label{filtration}
There is a filtration on $K_*(G/H)$ whose successive quotients are $K_*(F)$, $K_*(F)^m$, $\Lambda^2(K_*(F)^m), \dots, \Lambda^m(K_*(F)^m)$.
\end{cl}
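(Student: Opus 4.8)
The plan is to read the filtration straight off the degeneration of the Merkurjev spectral sequence just established, so that the argument is essentially formal. Recall that a first-quadrant homological spectral sequence $E_{p,q}^2 \Longrightarrow K_{p+q}(G/H)$ yields, for each fixed total degree $n$, a finite filtration
$$
0 = \Phi_{-1}K_n(G/H) \subseteq \Phi_0 K_n(G/H) \subseteq \dots \subseteq \Phi_n K_n(G/H) = K_n(G/H)
$$
with successive quotients $\Phi_p K_n(G/H) / \Phi_{p-1}K_n(G/H) \simeq E_{p,\,n-p}^{\infty}$; finiteness, and hence convergence, is automatic since the spectral sequence lives in the first quadrant.

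First I would invoke the degeneration proved in the previous subsection: all differentials vanish from the second page onward, so $E_{p,q}^{\infty} = E_{p,q}^2 = \Lambda^p(\Z^m) \otimes K_q(F)$. Therefore the $p$-th successive quotient of the filtration on $K_n(G/H)$ is $\Lambda^p(\Z^m) \otimes K_{n-p}(F)$, and this vanishes once $p > m$, so in total degree $n$ the filtration has at most $m+1$ nonzero layers.

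Next I would assemble these filtrations over all $n$ into a single filtration $\Phi_p K_*(G/H) := \bigoplus_n \Phi_p K_n(G/H)$ of the graded module $K_*(G/H)$. This is a filtration by graded $K_*(F)$-submodules, because the Levine spectral sequence, and hence the induced filtration, is a module over $K_*(F)$ \cite[Lemma~1.2]{lev}; concretely, the action of $K_j(F) = E_{0,j}^2$ carries $\Phi_p K_n(G/H)$ into $\Phi_p K_{n+j}(G/H)$. Its $p$-th successive quotient is then the graded $K_*(F)$-module whose degree-$n$ component is $\Lambda^p(\Z^m) \otimes K_{n-p}(F)$.

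Finally it remains to recognise this graded module as $\Lambda^p(K_*(F)^m)$, where $K_*(F)^m$ is given the grading with its $m$ generators in degree $1$ and $\Lambda^p$ the induced grading. Since $\Lambda^p(\Z^m)$ is free of rank $\binom{m}{p}$, placed in degree $p$ by this convention, one has $\Lambda^p(K_*(F)^m) \simeq \Lambda^p(\Z^m) \otimes K_*(F)$ with an overall degree shift by $p$, so that its degree-$n$ component is exactly $\Lambda^p(\Z^m) \otimes K_{n-p}(F)$, matching the successive quotient above. For $p = 0$ and $p = 1$ this reads $K_*(F)$ and $K_*(F)^m$, recovering the stated list $K_*(F), K_*(F)^m, \Lambda^2(K_*(F)^m), \dots, \Lambda^m(K_*(F)^m)$. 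I do not expect a genuine obstacle here; the only point requiring a little care is the bookkeeping of the degree shift (and the remark that $\Lambda^p$ vanishes for $p > m$, which is what makes the filtration of the full graded module finite).
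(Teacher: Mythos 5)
Your argument is correct and follows essentially the same route as the paper: read off the filtration in each total degree from the degenerate first-quadrant spectral sequence ($E^{\infty}_{p,q} = E^2_{p,q} = \Lambda^p(\Z^m)\otimes K_q(F)$), assemble these over all degrees into one filtration of $K_*(G/H)$, and identify the $p$-th graded quotient with $\Lambda^p(K_*(F)^m)$. The only difference is that you spell out the degree-shift bookkeeping, which the paper leaves implicit.
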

\begin{proof}
Since $E_{p, q}^{\infty} = E_{p, q}^2 = \Lambda^p(\Z^m) \otimes K_q(F) \Longrightarrow K_{p+q}(G/H)$, there is a filtration on each $K_i(G/H)$ with the following successive quotients:
 $K_i(F), \linebreak K_{i-1}(F)^m, \, \Lambda^2 (K_{i-2}(F)^m), \, \dots, \, \Lambda^i(\Z^m).$
These filtrations give a general filtration on $K_*(G/H)$ with the desired successive quotients.
\end{proof}

\begin{cl}
\label{free}
$K_*(G/H)$ is a free $K_*(F)$-module of rank $2^m$.
\end{cl}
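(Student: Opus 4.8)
The plan is to deduce this directly from Corollary~\ref{filtration}, the point being that the successive quotients appearing there are not merely $K_*(F)$-modules but \emph{free} ones. First I would record that $\Lambda^p(\Z^m)$ is a free $\Z$-module of rank $\binom{m}{p}$, so that the successive quotient $\Lambda^p(K_*(F)^m) = \Lambda^p(\Z^m) \otimes_\Z K_*(F)$ is a free graded $K_*(F)$-module of rank $\binom{m}{p}$ (its $\binom{m}{p}$ generators sitting in degree $p$, which is irrelevant below). In particular each such quotient is projective over $K_*(F)$.

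By Corollary~\ref{filtration} there is a finite filtration
$$
0 = F_{-1} \subseteq F_0 \subseteq \dots \subseteq F_m = K_*(G/H)
$$
of graded $K_*(F)$-modules with $F_p / F_{p-1} \cong \Lambda^p(K_*(F)^m)$; it is finite because $\Lambda^p(\Z^m) = 0$ for $p > m$. Since each $F_p / F_{p-1}$ is projective, the short exact sequence $0 \to F_{p-1} \to F_p \to F_p / F_{p-1} \to 0$ of graded $K_*(F)$-modules splits, so $F_p \cong F_{p-1} \oplus F_p / F_{p-1}$. Inducting on $p$ starting from $F_{-1} = 0$, I obtain an isomorphism of graded $K_*(F)$-modules
$$
K_*(G/H) \;\cong\; \bigoplus_{p=0}^{m} \Lambda^p(K_*(F)^m) \;=\; \Lambda(\Z^m) \otimes_\Z K_*(F),
$$
which is free over $K_*(F)$ of rank $\sum_{p=0}^{m} \binom{m}{p} = 2^m$.

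The argument has no real obstacle; the only step that needs a word is that a free module over any ring is projective, which is precisely what makes the extensions in the filtration split (everything else being the binomial identity for the rank count). I would also remark that this already computes $K_*(G/H)$ as a graded $K_*(F)$-module in the split case, which is the module part of the Theorem stated in the introduction; identifying the exterior algebra $\Lambda(\Z^m)$ with the subalgebra generated by the elements $t_k$ (resp.\ $s_1, s_2$) of Section~\ref{elements} is the content of the later, multiplicative, refinement.
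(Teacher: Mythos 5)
Your proof is correct and follows essentially the same route as the paper: split the filtration of Corollary~\ref{filtration} using that its successive quotients are free (hence projective) $K_*(F)$-modules, and count ranks via $\sum_p \binom{m}{p} = 2^m$. The only difference is that the paper records the resulting decomposition merely as an isomorphism of ungraded $K_*(F)$-modules and defers the graded identification to Proposition~\ref{main_th}, whereas you assert a graded splitting; this is harmless here, since the corollary itself makes no graded claim.
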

\begin{proof}
Let us consider the filtration on $K_*(G/H)$ defined in Corollary \ref{filtration}. All the successive quotients are free $K_*(F)$-modules of finite rank, therefore short exact sequences ending with those modules are split. It means that we have an isomorphism of $K_*(F)$-modules (which may not respect the graded structures):
$$
K_*(G/H) \simeq K_*(F) \oplus K_*(F)^m \oplus \Lambda^2(K_*(F)^m) \oplus \dots \oplus \Lambda^m(K_*(F)^m).
$$
\end{proof}

\subsection{Application of $E_{p,q}^*$}
Let us get some information about $K_*(G/H)$ using the considered spectral sequence.

\begin{lm}
\label{K1}
$K_1(G/H) \simeq K_1(F) \oplus \Z^m$. In particular, for reduced K-theory we have
 $ \widetilde K_1(G/H) \simeq \Z^m$.
\end{lm}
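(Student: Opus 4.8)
The plan is to extract $K_1(G/H)$ directly from the degenerate spectral sequence. By Corollary~\ref{filtration} the filtration on $K_1(G/H)$ has only the two possibly nonzero successive quotients $E^\infty_{0,1}=\Lambda^0(\Z^m)\otimes K_1(F)=K_1(F)$ and $E^\infty_{1,0}=\Lambda^1(\Z^m)\otimes K_0(F)=\Z^m$, so it furnishes a short exact sequence of abelian groups
\[
0 \longrightarrow K_1(F) \longrightarrow K_1(G/H) \xrightarrow{\,g_1\,} \Z^m \longrightarrow 0,
\]
where $g_1$ is the edge homomorphism considered above (already known to be surjective). Since $\Z^m$ is free, this sequence splits, which gives $K_1(G/H)\simeq K_1(F)\oplus\Z^m$.

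For the reduced statement I would use that $G/H$ carries the rational point $[e]$, so the structure morphism $p\colon G/H\to\operatorname{Spec}F$ has a section; hence $p^*\colon K_*(F)\to K_*(G/H)$ is split injective and, by definition, $\widetilde K_*(G/H)=\Coker(p^*)$. It therefore suffices to check that in degree $1$ the image of $p^*$ is exactly the subgroup $K_1(F)=E^\infty_{0,1}=\Ker g_1$ occurring in the sequence above, i.e. the bottom step of the filtration. Granting this, $g_1$ induces an isomorphism $\widetilde K_1(G/H)=\Coker(p^*)\xrightarrow{\ \sim\ }\Z^m$.

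The only point requiring genuine care is this last identification. I would verify it by unwinding the edge homomorphism: the forgetful functor gives a map $K_q(G;G/H)\to K_q(G/H)$ which is $R(G)$-linear for the augmentation action on the target, hence factors through $E^2_{0,q}=\Z\otimes_{R(G)}K_q(G;G/H)$, and this factorization is precisely the edge map; tracing $p^*$ through $K_q(G;\operatorname{Spec}F)=R(G)\otimes K_q(F)\xrightarrow{i^*\otimes\mathrm{id}}R(H)\otimes K_q(F)=K_q(G;G/H)$ and using $\Z\otimes_{R(G)}R(H)=\Z$ shows that $p^*$ agrees with the composite $K_q(F)=E^2_{0,q}=E^\infty_{0,q}\hookrightarrow K_q(G/H)$, so that $\operatorname{im}p^*=F_0K_q(G/H)$. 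Equivalently, one may phrase this via functoriality of the Merkurjev spectral sequence along the $G$-equivariant map $p$, the spectral sequence for $\operatorname{Spec}F$ being concentrated in the column $p=0$ because $R(G)$ is $R(G)$-free. Everything else — the diagonal bookkeeping and the splitting of the short exact sequence — is routine.
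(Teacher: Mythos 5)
Your proof is correct and follows essentially the same route as the paper: both extract the short exact sequence $0 \to K_1(F) \to K_1(G/H) \to \Z^m \to 0$ from the degenerate spectral sequence and then split it. The only difference is the splitting mechanism --- the paper uses the retraction $j^*$ induced by a rational point (which implicitly relies on the same identification of the bottom filtration step with the image of the pullback from $\mathrm{Spec}\,F$ that you verify explicitly for the reduced statement), while you split via freeness of $\Z^m$; both are valid.
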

\begin{proof}
The filtration on $K_1(G/H)$ implies the existence of a short exact sequence:
$$
\xymatrix{
0 \ar[r] & K_1(F) \ar[r] & K_1(G/H) \ar[r] & \Z^m \ar[r]  & 0 \\
}
$$
It splits by means of a homomorphism $j^* \colon K_1(G/H) \to K_1(F)$ induced by an inclusion
$j \colon pt \hookrightarrow G/H$.
\end{proof}

Let us introduce some notation: $A$ is the graded ring $K_*(F)$; $A^+ = \bigoplus_{i > 0} A_i$ ($A / A^+ = \Z$); $B$ is the graded $A$-module $K_*(G/H)$. The quotient module $B / (A^+ \! \cdot \! B)$ has the structure of a $\Z$-module.

\begin{lm}
\label{reduction}
There is an isomorphism of abelian groups $ \,\, B / (A^+ \! \cdot \! B) \simeq \Lambda(\Z^m) $.
\end{lm}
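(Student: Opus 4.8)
The plan is to identify the $\Z$-module $B/(A^+\cdot B)$ with the "associated graded with respect to the topological filtration, reduced mod $A^+$", and then feed in the computation of the $E^\infty$-page. From Corollary~\ref{free} we already know that $B$ is a free $A$-module of rank $2^m$; the point is now to pin down how the free generators sit in the various degrees, i.e.\ to recover the graded structure that Corollary~\ref{free} deliberately forgot. Here the degeneration of the Merkurjev spectral sequence at $E^2$ is the essential input: it gives a finite exhaustive filtration $0 = \F_{-1} B \subseteq \F_0 B \subseteq \dots \subseteq \F_m B = B$ of the $A$-module $B = K_*(G/H)$ with associated graded pieces $\F_p B / \F_{p-1} B \simeq E^\infty_{p,*} = \Lambda^p(\Z^m)\otimes_{\Z} K_*(F) = \Lambda^p(\Z^m)\otimes_{\Z} A$ as graded $A$-modules (the shift of internal degree by $p$ being absorbed into the grading on $\Lambda^p(\Z^m)$, which we regard as concentrated in degree~$p$).

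Next I would argue that passing to $B/(A^+\cdot B)$ kills exactly the ambiguity in these extensions. Concretely, tensoring the filtration $\F_\bullet B$ with $A/A^+ = \Z$ over $A$ is right exact, so one gets a filtration on $B/(A^+\cdot B)$ whose successive quotients are quotients of $(\F_p B/\F_{p-1}B)\otimes_A \Z = (\Lambda^p(\Z^m)\otimes_{\Z} A)\otimes_A \Z = \Lambda^p(\Z^m)$. To see that no further collapsing occurs, note that each $\F_p B/\F_{p-1} B$ is a \emph{free} $A$-module, so the short exact sequences $0 \to \F_{p-1}B \to \F_p B \to \F_p B/\F_{p-1}B \to 0$ remain exact after $-\otimes_A \Z$ (the connecting $\Tor_1^A(\F_p B/\F_{p-1}B, \Z)$ vanishes). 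Hence $B/(A^+\cdot B)$ carries a filtration with graded pieces exactly $\Lambda^p(\Z^m)$ for $p = 0,\dots,m$. Since these graded pieces are free $\Z$-modules, the filtration splits as abelian groups, giving $B/(A^+\cdot B) \simeq \bigoplus_{p=0}^m \Lambda^p(\Z^m) = \Lambda(\Z^m)$.

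Alternatively — and this is probably cleaner to write — I would use Corollary~\ref{free} together with a degree count: $B/(A^+\cdot B)$ is a free $\Z$-module whose rank equals the rank of $B$ over $A$, namely $2^m = \operatorname{rk}_{\Z}\Lambda(\Z^m)$, and whose Hilbert series (grading by $K$-degree) is computed from the $E^\infty$-page to be $\sum_{p=0}^m \binom{m}{p} t^p = (1+t)^m$, which is also the Hilbert series of $\Lambda(\Z^m)$. A graded free $\Z$-module is determined up to isomorphism by its Hilbert series, so the two agree. The main obstacle is the bookkeeping of the internal ($K$-theory) grading versus the filtration index $p$: one must be careful that the degeneration statement "$E^2 = E^\infty$" is used with the correct identification $E^\infty_{p,q} = \Lambda^p(\Z^m)\otimes K_q(F)$ contributing to $K_{p+q}(G/H)$, so that the generator coming from $\Lambda^p(\Z^m)$ really lands in $K$-degree $p$ after reduction mod $A^+$; once this is set up correctly the splitting is automatic because everything in sight is $\Z$-free of finite rank in each degree.
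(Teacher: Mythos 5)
Your proof is correct and follows essentially the same route as the paper: both reduce the filtration coming from the degenerate Merkurjev spectral sequence modulo $A^+$ and observe that only the row $E_{p,0}^{\infty}=\Lambda^p(\Z^m)$ survives in each degree. Your version is in fact slightly more careful on one point the paper leaves implicit, namely that no further collapsing occurs, which you justify by the freeness of the associated graded pieces over $A$ and the resulting vanishing of $\Tor_1^A$.
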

\begin{proof}
For every $p>0$ there is a filtration on $K_p(G/H)$ of length $p+1$ such that $K_p(G/H)^{(p)} = K_p(F)$.  Taking the quotient by $K_p(G/H)^{(p)}$ we get a filtration of length $p$ on the quotient group, the first term of which is again zero in $ B / (A^+ \! \cdot \! B)$. Iterating this process we get that the homomorphism $B \to B / (A^+ \! \cdot \! B)$ sends the free summand $K_p(G/H)$ to $\Lambda^p(\Z^m) = E_{p, 0}^2$.
\end{proof}

\section{Computation of $K_*(G/H)$ as a graded $K_*(F)$-module}

Let us consider the exterior algebra $\Lambda(\Z^m)$ as an abelian group with the natural grading: $\Lambda(\Z^m)_i = \Lambda^i(\Z^m)$.

\begin{prop}
\label{main_th}
There is an isomorphism of graded $K_*(F)$-modules: 
$$K_*(G/H) \simeq K_*(F) \otimes_{\Z} \Lambda(\Z^m).$$
\end{prop}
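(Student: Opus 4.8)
The plan is to exhibit an explicit morphism of graded $K_*(F)$-modules
$$
\phi \colon K_*(F) \otimes_{\Z} \Lambda(\Z^m) \longrightarrow K_*(G/H)
$$
and to show it is an isomorphism, thereby upgrading the ungraded splitting of Corollary~\ref{free} to a graded one. To build $\phi$, recall from the degeneration argument that the edge homomorphism $g_1 \colon K_1(G/H) \to E^2_{1,0} = \Lambda^1(\Z^m)$ is surjective; choose homogeneous classes $b_1, \dots, b_m \in K_1(G/H)$ whose images under $g_1$ form the standard basis $e_1, \dots, e_m$ of $\Lambda^1(\Z^m)$ (for instance suitable $\Z$-linear combinations of the classes $t_k$, resp.\ $s_i$, from Section~\ref{elements}). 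Since $\Lambda(\Z^m)$ is the free graded $\Z$-algebra on $\Lambda^1(\Z^m)$ placed in degree~$1$ and the $b_i$ sit in degree~$1$ of the graded-commutative ring $K_*(G/H)$, the rule $e_i \mapsto b_i$ extends uniquely to a graded ring map $\Lambda(\Z^m) \to K_*(G/H)$ and then $K_*(F)$-linearly to a graded $K_*(F)$-module map $\phi$, with $\phi\bigl(k \otimes (e_{i_1}\wedge \dots \wedge e_{i_p})\bigr) = k \cdot b_{i_1}\cdots b_{i_p}$.

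The first real step is to compute $\phi$ after reduction modulo the positive part $A^+$ of $A = K_*(F)$. Since the source is generated over $A$ by the elements $1 \otimes e_I$, $I \subseteq \{1,\dots,m\}$, the reduced map $\phi \otimes_A \Z$ sends the $\Z$-basis $(e_I)$ of $\Lambda(\Z^m)$ to the classes of $b_I := b_{i_1}\cdots b_{i_p}$ in $B/(A^+B)$, where $B = K_*(G/H)$. By Lemma~\ref{reduction} and its proof, $B/(A^+B) \simeq \Lambda(\Z^m)$ via the edge homomorphisms $g_p$; and because the edge homomorphism is multiplicative for the wedge product (this is where Lemma~\ref{multi} enters), $g_p(b_I) = g_1(b_{i_1})\wedge\dots\wedge g_1(b_{i_p}) = e_I$. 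Hence $\phi \otimes_A \Z$ is the identity of $\Lambda(\Z^m)$. Since $B$ is a graded $A$-module concentrated in non-negative degrees and $A_0 = \Z$, graded Nakayama's lemma now yields that $\phi$ is surjective.

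For injectivity, note that both source and target are free $K_*(F)$-modules of rank $2^m$ --- the source by inspection, the target by Corollary~\ref{free}. From the short exact sequence $0 \to \Ker\phi \to K_*(F)\otimes\Lambda(\Z^m) \to K_*(G/H) \to 0$ and projectivity of $K_*(G/H)$ over $K_*(F)$, the sequence splits; reducing modulo $A^+$ and comparing $\Z$-ranks (using Lemma~\ref{reduction} for the rank of $B/(A^+B)$) forces $\Ker\phi /(A^+ \Ker\phi) = 0$, and since $\Ker\phi$ is a graded submodule of a module bounded below, graded Nakayama again gives $\Ker\phi = 0$. (This two-step Nakayama argument sidesteps the fact that $K_*(F)$ is only graded-commutative, not commutative.) One could alternatively run everything through the filtration of $K_*(G/H)$ coming from the degenerate spectral sequence: $\phi$ is filtered because each $b_i$ lies in $F_1 K_1(G/H)$, and the computation above together with $E^\infty_{p,q} = E^\infty_{p,0}\otimes K_q(F)$ shows that its associated graded is the identity, whence $\phi$ is an isomorphism because the filtration is finite in every total degree.

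The genuine difficulty I anticipate is the very passage from Corollary~\ref{free}, which only gives an ungraded splitting, to the graded statement: one has to produce \emph{homogeneous} $K_*(F)$-module generators of $K_*(G/H)$ that realise the exterior grading, and the crux is that the explicit degree-$1$ classes multiply, in each degree $p$, to a $\Z$-basis of $E^\infty_{p,0} = \Lambda^p(\Z^m)$. This is precisely the information packaged in Lemma~\ref{multi} (the identification of $\smile_2$ with the wedge product). Everything else --- the module structure of the spectral sequence, compatibility with filtrations, finiteness of the filtration in each degree, and the Nakayama reductions --- is routine bookkeeping.
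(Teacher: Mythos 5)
Your proposal is correct and follows essentially the same route as the paper: define $\phi$ on $K_*(F)\otimes\Lambda(\Z^m)$ by sending $e_{i_1}\wedge\dots\wedge e_{i_p}$ to a product of degree-one classes lifting a basis of $E^2_{1,0}$, show the reduction modulo $A^+$ is an isomorphism via Lemma~\ref{reduction} and the multiplicativity of the edge homomorphism, conclude surjectivity by graded Nakayama, and then get injectivity from the equality of ranks of free $K_*(F)$-modules (Corollary~\ref{free}) plus a second application of graded Nakayama. The only cosmetic difference is that you phrase the extension of $e_i\mapsto b_i$ as a graded ring map (which would require $b_i^2=0$); like the paper, you only ever need it as a module map defined on the basis $(e_I)$, so nothing is lost.
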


\begin{proof}
Let $S$ be a graded ring, $S^+ = \bigoplus_{i > 0} S_i$. For a graded $S$-module $P$ we will denote the $S/S^+$-module $P / (S^+ \! \cdot \! P)$ as $\overline{P}$.

As earlier, we will write $A$ for $K_*(F)$. Let us introduce notation for graded $A$-modules: $B = K_*(G/H)$, $\,\,C = K_*(F) \otimes \Lambda(\Z^m)$, and let $j \colon A \hookrightarrow B$ be the canonical inclusion.

Consider the homomorphism of graded $A$-modules: $$\phi = j \otimes \Lambda(id) \colon \, C \to B,$$

where $\Lambda(id) = id \colon \Z^m \to \Z^m \subset B_1$ (see Lemma~\ref{K1}), and   $\Lambda(id)(e_{i_1} \wedge \dots \wedge e_{i_r}) = \Lambda(id)(e_{i_1}) \cup \dots \cup \Lambda(id)(e_{i_r}) \in B_r$. 

We will show that $\phi$ is an isomorphism. To do that we will use the graded version of the Nakayama lemma. 

\begin{lm*}[Graded Nakayama Lemma]
\label{nakayama}
\noindent
Let $R = \bigoplus_{i=0}^{\infty} R_i$ be a graded ring, $R^+ = \bigoplus_{i>0} R_i$. 
Let $M$ be a graded $R$-module such that $M_j = 0$ for $j << 0$. Then $R^+ \cdot M = M$ implies $M = 0$.
\end{lm*}

First we will check that $\phi$ is an epimorphism.

\begin{lm}
\label{sur}
The homomorphism $\phi$ is surjective.
\end{lm}
\begin{proof}
Observe that $\overline{C} = \Z \otimes \Lambda(\Z^m) \simeq \Lambda(\Z^m)$. It follows from Lemma~ \ref{reduction} that also $\overline{B} \simeq \Lambda(\Z^m)$. The induced homomorphism of $\Z$-modules $\overline{\phi} \colon \overline{C} \to \overline{B}$ maps $\Z^m$ to $\Z^m$ isomorphically. Thus $\overline{\phi}$ is an isomorphism. It implies that $\overline{\Coker\,\phi} = 0$. Then by the graded Nakayama lemma $\Coker\,\phi = 0$.
\end{proof}

It follows from Corollary \ref{free} and Lemma \ref{sur} that the homomorphism $\phi$ is a graded epimorphism of free finitely-generated $K_*(F)$-modules of the same rank. It implies that $\overline{\Ker\,\phi} = 0$, and so  $\Ker\,\phi = 0$ by the graded Nakayma lemma.  Thus $\phi$ is an isomorphism of graded $K_*(F)$-modules.

\end{proof}

\section{Computation of generators of $K_*(G/H)$ as $K_*(F)$-module}
\subsection{Computation of generators of $\widetilde K_1(G/H)$}
It follows from Lemma~\ref{K1} that for reduced K-theory we have $ \widetilde K_1(G/H) \simeq \Z^m$. To get the final answer we only need to find $m$ generating elements for $ \widetilde K_1(G/H)$. 
First let us consi\-der $ \widetilde K_1(G)$. It was proved by M.\,Levine \cite[Theorem~2.1 and Cor.~2.2]{lev} that for $G = \SL_{2n}$ and $G = \E_6$ there is an isomorphism $ \widetilde K_1(G) \simeq \Z^l$ where $l = rk(G)$; moreover, $ \widetilde K_1(G)$ is generated by the elements $[\rho_1], \dots, [\rho_l] \in K_1(G)$. 

Recall that for each pair of representations $(\rho_{i_1}, \rho_{i_2})$ we constructed an element ${\beta(\rho_{i_1}-\rho_{i_2}) \in K_1(G/H)}$ in~\ref{construction}.

\begin{prop}
\label{main_k1}
$\widetilde K_1(G/H)$ is generated by the elements $u_i = \beta(\rho_{i_1} - \rho_{i_2})$, ${1 \leqslant i \leqslant m}$.
\end{prop}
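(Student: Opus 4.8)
The goal is to show that the $m$ explicitly constructed elements $u_i = \beta(\rho_{i_1} - \rho_{i_2})$ generate $\widetilde K_1(G/H) \simeq \Z^m$. Since a surjection $\Z^m \twoheadrightarrow \Z^m$ of free abelian groups of the same finite rank is automatically an isomorphism, it is in fact enough to produce \emph{any} surjection; so I would aim to prove surjectivity rather than attempt to match up bases on the nose. The natural strategy is to relate the $u_i$ in $G/H$ to the Levine generators $[\rho_j] \in \widetilde K_1(G)$ via the quotient map $\pi \colon G \to G/H$, using the fact \cite[Theorem~2.1]{lev} that $\widetilde K_1(G)$ is the free abelian group on $[\rho_1], \dots, [\rho_l]$.

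\textbf{Key steps.} First I would analyze the pullback $\pi^* \colon K_1(G/H) \to K_1(G)$. Unwinding the construction in \ref{construction}: the element $\beta(\phi-\psi)$ is represented by the morphism $[g] \mapsto \phi(g)\alpha^{-1}\psi(g)^{-1}\alpha$ from $G/H$ to $\GL_k$; composing with $\pi$ gives the morphism $g \mapsto \phi(g)\alpha^{-1}\psi(g)^{-1}\alpha$ on $G$ itself, which in $K_1(G)$ equals $[\phi] - [\psi]$ (the class of $\alpha$ is trivial, being a constant invertible matrix, i.e. coming from the base field, and $\psi(g)^{-1}$ contributes $-[\psi]$ because $K_1$ is a group under multiplication of matrices). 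Hence $\pi^* \beta(\rho_{i_1} - \rho_{i_2}) = [\rho_{i_1}] - [\rho_{i_2}] \in \widetilde K_1(G)$. So under $\pi^*$ the images of $u_1, \dots, u_m$ are exactly the classes $[\rho_{i_1}] - [\rho_{i_2}]$, one for each pair that collapses over $H$. Second, I would identify what the image of $\pi^* \colon \widetilde K_1(G/H) \to \widetilde K_1(G)$ actually is: by the spectral-sequence computation (or directly by comparing with Levine's answer for $K_*(G)$), $\widetilde K_1(G/H) \simeq \Z^m$ injects via $\pi^*$ onto the sublattice of $\widetilde K_1(G) \simeq \Z^l$ spanned by the $m$ differences $[\rho_{i_1}] - [\rho_{i_2}]$ — these differences are linearly independent in $\Z^l$ since each pair involves distinct fundamental representations (for $\SL_{2n}$ the pairs are $\{\Lambda^k V, \Lambda^{2n-k}V\}$ for $k = 1,\dots,n-1$, all disjoint; for $\E_6$ the pairs $\{\rho,\rho^\vee\}$ and $\{\Lambda^2\rho,\Lambda^2\rho^\vee\}$ are disjoint). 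Third: since $\pi^*$ is injective on $\widetilde K_1(G/H)$ (which I would justify either from the edge homomorphism / filtration discussion, or because $\widetilde K_1(G/H)$ is torsion-free of rank $m$ and its image is a rank-$m$ lattice) and sends the subgroup $\langle u_1,\dots,u_m\rangle$ onto the full rank-$m$ image lattice $\langle [\rho_{i_1}]-[\rho_{i_2}] \rangle$, the $u_i$ must generate a finite-index subgroup whose image equals all of $\mathrm{im}(\pi^*)$; combined with $\pi^*$ being an injection of the rank-$m$ group $\widetilde K_1(G/H)$ into $\Z^l$, I conclude $\langle u_1,\dots,u_m\rangle = \widetilde K_1(G/H)$.

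\textbf{Main obstacle.} The delicate point is establishing that $\pi^* \colon \widetilde K_1(G/H) \to \widetilde K_1(G)$ is \emph{injective} and correctly identifying its image as the primitive (saturated) sublattice spanned by the differences $[\rho_{i_1}]-[\rho_{i_2}]$ — not merely a finite-index subgroup of it. If one only knows $\pi^*$ lands in that span with finite index, one could still miss $\widetilde K_1(G/H)$ by a finite cokernel. I would handle this by a compatibility argument between the two Merkurjev spectral sequences (for $X = G/H$ with its $G$-action, and for $X = G$ with its $G$-action, whose answer is Levine's), where $\pi^*$ is induced on $E^2$ by the edge map $\Tor_p^{R(G)}(\Z, R(H)) \to \Tor_p^{R(G)}(\Z, R(G))$; for $p=1$ this is the map $\Lambda^1(\Z^m) \to \widetilde K_1(G)$ dual to the regular sequence inclusion $I = (\widehat{\rho_1},\dots,\widehat{\rho_m}) \subset (\rho_1,\dots,\rho_l)$, which is visibly a split injection of free abelian groups onto the span of the $m$ generators. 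Alternatively — and this is probably cleaner — I would invoke Proposition~\ref{main_th}: the map $\phi \colon K_*(F)\otimes\Lambda(\Z^m) \xrightarrow{\sim} K_*(G/H)$ is built precisely from a chosen identification $\Z^m \xrightarrow{\sim} \widetilde K_1(G/H)$, and checking that the $u_i$ are sent (under $g_1$, or under $\overline{(-)}$ as in Lemma~\ref{reduction}) to a basis of $\Lambda^1(\Z^m) = E^2_{1,0}$ reduces to the same $\pi^*$-computation, now with the target a free $\Z$-module where "finite-index subgroup containing a basis" is forced to be everything once we know the images are a basis \emph{of} $E^2_{1,0}$ and not merely of a sublattice. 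The real content, then, is the identity $\pi^*\beta(\rho_{i_1}-\rho_{i_2}) = [\rho_{i_1}]-[\rho_{i_2}]$ together with Levine's theorem; everything else is lattice bookkeeping.
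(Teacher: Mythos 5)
Your proposal is correct, and its real content --- the identity $p^*\beta(\rho_{i_1}-\rho_{i_2}) = [\rho_{i_1}]-[\rho_{i_2}]$ in $\widetilde K_1(G)$ combined with Levine's description of $\widetilde K_1(G)$ as the free abelian group on $[\rho_1],\dots,[\rho_l]$ --- is exactly the content of the paper's proof. Where you diverge is in the concluding lattice argument, and here the paper is noticeably slicker: instead of proving that $p^*\colon \widetilde K_1(G/H)\to\widetilde K_1(G)$ is injective and that its image is precisely the \emph{saturated} sublattice spanned by the differences (your self-identified ``main obstacle,'' for which you propose a compatibility of Merkurjev spectral sequences), the paper simply writes down an explicit retraction $\chi\colon \widetilde K_1(G)\to\Z^m$ sending $[\rho_{i_1}]\mapsto e_i$ and every other Levine generator to $0$. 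Then $\chi\circ p^*\circ\psi=\mathrm{id}$ on $\Z^m$, where $\psi(e_i)=u_i$, so $\psi$ is split injective; a split injection of $\Z^m$ into $\widetilde K_1(G/H)\simeq\Z^m$ has image a rank-$m$ direct summand and is therefore surjective. This one map $\chi$ is, in effect, a witness to the primitivity of the difference lattice (the pairs $\{i_1,i_2\}$ being disjoint makes such a $\chi$ exist), and it renders both the injectivity of $p^*$ and any spectral-sequence comparison unnecessary. Your route does close up --- injectivity of $p^*$ follows from a rank count as you indicate, and the differences $e_{i_1}-e_{i_2}$ do span a saturated sublattice of $\Z^l$ because the index pairs are pairwise disjoint --- but you should make that primitivity verification explicit rather than gesture at it, since it is precisely the point where a finite cokernel could otherwise hide; once you do, your argument and the paper's are two phrasings of the same piece of linear algebra over $\Z$.
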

\begin{proof}
Let $\Z^m$ be generated by elements $e_1, \dots, e_m$ as a free abelian group. Consider the diagram of abelian groups and their homomorphisms:
$$
\xymatrix{
\Z^m \ar[r]^-{\psi}  & \widetilde K_1(G/H) \simeq \Z^m \ar[ld]^-{p^*} \\
\widetilde K_1(G) \simeq \Z^l  \ar[u]^{\chi}}
$$
The homomorphisms are defined the following way:

1) $\psi \colon e_i \mapsto u_i$,

2) $p^* \colon \widetilde K_1(G/H) \to \widetilde K_1(G)$ is induced by projection $p \colon G \to G/H$,

3) $\chi \colon \widetilde K_1(G) \to \Z^m$ is defined on generators: 
$
[\rho_k] \mapsto \begin{cases}
e_i, &\text{ if $k=i_1$,}\\
0,  &\text{else.}
\end{cases}
$

We will show that $\psi$ is an isomorphism. Observe that: 
$$p^*(u_i) = p^*([\rho_{i_1} \cdot \alpha_i^{-1} \cdot \rho_{i_2}^{-1} \cdot \alpha_i]) = 
[\rho_{i_1}] + [\alpha_i^{-1}] + [\rho_{i_2}^{-1}] + [\alpha_i] = [\rho_{i_1}] - [\rho_{i_2}].$$
Therefore, 
$$
(\chi \circ p^* \circ \psi) (e_i) =( \chi \circ p^* )(u_i) = \chi ([\rho_{i_1}] - [\rho_{i_2}]) = e_i.
$$
Hence $\chi \circ p^* \circ \psi = id$, i.e., $\psi$ is injective. 
Note that $\psi \colon \Z^m \to \Z^m$ has a left inverse $\chi \circ p^*$. It implies that $\Coker \, \psi = 0$ and so $\psi$ is surjective.

\end{proof}

\subsection{Final result}
To study later K-theory of twisted forms of varieties, we will formulate now the obtained result in functorial terms.

Let $X$ be a variety and let $\xi = (x_1, \dots, x_m)$ be a set of elements in $K_1(X)$. For every subset of indices $I = \{i_1 < \dots < i_q\} \subseteq \{1, \dots, m\}$ we denote $x_I = x_{i_1} \cup \dots \cup x_{i_q} \in
 K_{|I|}(X)$ where $|I|$ is cardinality of $I$. For $I = \varnothing$ define $x_{\varnothing} = 1 \in K_0(F)$.

Let us consider the homomorphisms of graded $K_*(F)$-modules:
$$
\Theta_{I, \xi} \colon K_{*-|I|}(F) \to K_*(X); \quad \alpha \mapsto x_I \cup \alpha.
$$
We define the homomorphism $\Theta_{\xi}$ the following way:
$$
\Theta_{\xi} = \sum_I \Theta_{I, \xi} \colon \bigoplus_I K_{*-|I|}(F) \to K_*(X),
$$
where $I$ runs through all the subsets of a set $\{1, \dots, m\}$.

$ $

The final result follows from Propositions \ref{main_th} and \ref{main_k1}.
\begin{tm}
\label{th_card}
Let $t = (t_1, \dots, t_{n-1})$ and $s = (s_1, s_2)$ be the sets of elements in $K_1(\SL_{2n}/\Sp_{2n})$ and $K_1(\E_6/\F_4)$ respectively, defined in~\ref{elements}. Then the homomorphisms of graded $K_*(F)$-modules $\Theta_t$ and $\Theta_s$ are isomorphisms:
$$
\Theta_t \colon \bigoplus_{I \subseteq \{1, \dots, n-1\}} \! \! \! \! \! K_{*-|I|}(F) \,\, \xrightarrow{\sim} \,\, K_*(\SL_{2n}/\Sp_{2n});
$$
$$
\Theta_s \colon \bigoplus_{I \subseteq \{1, 2\}} \! \! \! K_{*-|I|}(F) \,\, \xrightarrow{\sim} \,\, K_*(\E_6 / \F_4).
$$
\end{tm}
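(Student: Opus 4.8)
The plan is to recognise $\Theta_t$ and $\Theta_s$ as the isomorphism $\phi$ constructed in the proof of Proposition~\ref{main_th}, once that isomorphism is written using the basis of $\widetilde K_1(G/H)$ provided by Proposition~\ref{main_k1}.

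First I would analyse the source. With $m=n-1$ for $\SL_{2n}/\Sp_{2n}$ and $m=2$ for $\E_6/\F_4$, the graded $K_*(F)$-module $\bigoplus_{I\subseteq\{1,\dots,m\}}K_{*-|I|}(F)$ is free of rank $2^m$, with one free generator placed in degree $|I|$ for each subset $I$. Since the underlying $\Z$-module of $\Lambda(\Z^m)$ is free on the wedge monomials, sending the generator attached to $I=\{i_1<\dots<i_q\}$ to $e_I:=e_{i_1}\wedge\dots\wedge e_{i_q}$ yields an isomorphism of graded $K_*(F)$-modules from this source onto $K_*(F)\otimes_\Z\Lambda(\Z^m)$. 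Under it, $\Theta_\xi$ becomes the map $\alpha\otimes e_I\mapsto x_{i_1}\cup\dots\cup x_{i_q}\cup\alpha$; by graded commutativity of the $K$-theory product this equals $(-1)^{|I|\cdot\deg\alpha}$ times $\phi(\alpha\otimes e_I)$, where $\phi=j\otimes\Lambda(\mathrm{id})$ is the map of Proposition~\ref{main_th} with $\Lambda(\mathrm{id})$ chosen to send $e_i$ to $x_i$. So $\Theta_\xi$ is $\phi$ followed by a diagonal sign automorphism of the source; in particular $\Theta_\xi$ is an isomorphism if and only if $\phi$ is.

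Next I would feed in Proposition~\ref{main_k1}. The entries of $\xi=t=(t_1,\dots,t_{n-1})$ are precisely the elements $u_i=\beta(\rho_{i_1}-\rho_{i_2})$ for $\SL_{2n}/\Sp_{2n}$, and similarly the entries of $\xi=s=(s_1,s_2)$ for $\E_6/\F_4$. Proposition~\ref{main_k1}---whose proof exhibits an isomorphism $\psi\colon\Z^m\xrightarrow{\sim}\widetilde K_1(G/H)$, $e_i\mapsto u_i$---shows that the $u_i$ constitute a $\Z$-basis of $\widetilde K_1(G/H)\simeq\Z^m$. Taking this $\psi$ as the isomorphism $\Z^m\xrightarrow{\sim}\widetilde K_1(G/H)\subset K_1(G/H)$ entering the definition of $\phi$, we recover exactly the map $\phi$ of the preceding paragraph.

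It then remains to invoke the proof of Proposition~\ref{main_th} for this $\phi$. Passing to the associated $\Z$-modules $\overline C$ of $C=K_*(F)\otimes_\Z\Lambda(\Z^m)$ and $\overline B$ of $B=K_*(G/H)$ (in the notation of that proof), the induced map $\overline\phi$ carries the basis $e_1,\dots,e_m$ of $\overline C_1$ to the basis $u_1,\dots,u_m$ of $\overline B_1\simeq\widetilde K_1(G/H)$ by Lemma~\ref{reduction}, hence is an isomorphism, so $\Coker\,\phi=0$ by the graded Nakayama lemma. Since $\phi$ is then a graded surjection between free $K_*(F)$-modules of the same finite rank $2^m$ (Corollary~\ref{free}), we get $\overline{\Ker\,\phi}=0$ and thus $\Ker\,\phi=0$ by graded Nakayama once more. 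Therefore $\Theta_t$ and $\Theta_s$ are isomorphisms of graded $K_*(F)$-modules, as claimed. Essentially all the content is thereby carried by Propositions~\ref{main_th} and~\ref{main_k1}; the step I expect to need the most care is the identification in the first two paragraphs---putting $\Theta_\xi$ into the form $\phi$ while respecting the grading conventions and correctly tracking the signs coming from moving the scalar $\alpha$ past the classes $x_I$.
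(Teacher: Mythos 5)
Your proposal is correct and follows exactly the route the paper intends: the paper's entire proof of this theorem is the sentence ``The final result follows from Propositions~\ref{main_th} and~\ref{main_k1},'' and you have simply spelled out that deduction --- identifying $\Theta_\xi$ with the map $\phi$ of Proposition~\ref{main_th} built from the basis $u_i$ supplied by Proposition~\ref{main_k1}, up to a harmless diagonal sign. The care you take with the sign from moving $\alpha$ past $x_I$ is a detail the paper glosses over, but it changes nothing.
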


\section{Twisted forms and central simple algebras}

From now on we will assume that $\chr(F) \ne 2$. As earlier, we denote both varieties $\SL_{2n} / \Sp_{2n}$ and $\E_6 / \F_4$ as $G/H$ or $X$. We denote the center of an algebraic group $G$ as $Z(G)$.

Let us consider an action of the group $H$ on the variety $G/H$ by left translation. This action can be extended to $\overline{H} = H / Z(H)$ (in the first case $\overline{H} = \Sp_{2n} / \mu_2$, in the second case $\overline{H} = \F_4$). Let us fix a 1-cocycle $\gamma \colon \Gal(F^{sep} / F) \to \overline{H}(F^{sep})$. Since there is an action of $\overline H$ on $G/H$, we can consider a twisted form of the variety $X$ denoted $X_{\gamma}$. The rest of the paper consists of the computation of $K_*(X_{\gamma})$.

\subsection{Twisting of central simple algebras}
\label{twist_alg}
\begin{df}
For an algebraic group $G$ let us introduce a notation for the {\it group of characters of the center}: $\Ch(G) = \Hom(Z(G), \Gm)$.  
\end{df}
 
\begin{df}
A representation $\sigma \colon G \to \GL(V)$ of an algebraic group $G$ is called $\Ch$-\textit{homogeneous} if there is a character $\chi \in \Ch(G)$ such that $\sigma(z) v = \chi(z) \cdot v$ for every $z \in Z(G), \, v \in V$. In particular, irreducible representations are $\Ch$-homogeneous.
\end{df}

Let $\sigma \colon H \to \GL(V)$ be a $\Ch$-homogeneous representation of the group $H$ and $A = \End_{F}(V)$ be a central simple algebra (we will write $\End(V)$ for $\End_{F}(V)$).
Consider the action of $H$ on $A$ by conjugation: \linebreak $(h, \alpha) \mapsto \sigma(h) \alpha \sigma(h)^{-1}$. It induces an action of $\overline H$ on the algebra $A$. From the action of $\overline H$ on the algebra $A$ and a cocycle $\gamma \colon \Gal(F^{sep} / F) \to \overline{H}(F^{sep})$ the Tits construction allows to build a twisted algebra $A_{\gamma}^{\sigma}$ (see~\cite[\S3]{tits} or~\cite[8.6]{pan}).

\begin{rem}
Let $V$ be a $2n$-dimensional vector space over $F$, $A = \End(V)$, and let $\tau$ be an involution on $A$ corresponding to the standard antisymmetric form. Consider the action of $\overline{\Sp}_{2n}$ on $\SL_{2n}$ and $\Sp_{2n}$ by conjugation. Then for twisted forms of $\SL_{2n} / \Sp_{2n}$ we have: 
$$(\SL_{2n}/\Sp_{2n})_{\gamma} \simeq (\SL_{2n})_{\gamma}/(\Sp_{2n})_{\gamma} =(\SL_{1, A})_{\gamma} / \Sp(A, \tau)_{\gamma} = 
\SL_{1, A_{\gamma}} / \Sp(A_{\gamma}, \tau_{\gamma}).$$
\end{rem}

\subsection{Computation of central simple algebras}
\label{cpas}

Let $\sigma \colon H \to \GL(V)$ be a $\Ch$-homogeneous representation of a group $H$ and let $A = \End(V)$. The class of the algebra $A_{\gamma}^{\sigma}$ in the Brauer group $\Br(F)$ depends only on the character $\chi \in \Ch(H)$ representing the action of $Z(H)$ on $V$ under $\sigma$ \cite[8.7]{pan}. We will compute the equivalence classes of the algebras $A_{\gamma}^{\sigma}$ in the Brauer group for fundamental representations $\sigma$ of the groups $H = \Sp_{2n}$ and $H=\F_4$.

The center of the group $\Sp_{2n}$ is equal to $\mu_2$, so $\Ch(\Sp_{2n}) = \Z / 2\Z$. Under vector representation $V$ the center acts with the character $\overline{1} \in \Z / 2\Z$. Under representation $\Lambda^r V$ the center acts with the character $\overline{r} \in \Z / 2\Z$. Hence in $\Br(F)$ there are equivalences for $A_{i, \gamma} = \End(\Lambda^i V)_{\gamma}^{\Lambda^i V}$: 
$$A_{i, \gamma} \sim A_{\gamma} \text{ if $i$ is odd;  } \quad A_{i, \gamma} \sim F \text{ if $i$ is even,} $$
where $A = \End(V)$, $\, V$ is a $2n$-dimensional vector space, $i = 1 \dots n$.

The center of the group $F_4$ is trivial, so the group of characters is tri\-vial also. Therefore for all four algebras $A_{i, \gamma} = \End(V_i)_{\gamma}^{\sigma_i}$ corresponding to fundamental representations $\sigma_i$ of the group $F_4$ we have $A_{i, \gamma} \sim F$.

\section{Construction of certain elements in $K_1$}
\label{elem_twist}

\begin{df}
Let $B$ be a central simple $F$-algebra. For an affine variety $Y$ we put $B[Y] = B \otimes_{F} F[Y]$. Then $K_1$ with coefficients in $B$ is defined as follows:
$$
K_1(Y, B) = K_1(B[Y]) = \GL(B[Y]) / E(B[Y]).
$$
\end{df}

\subsubsection*{General construction}
Suppose there is a morphism $f \in \Mor_{F}(Y, \GL_{1, B})$. We identify $\Mor_{F}(Y, \GL_{1, B})$ with $\GL_1(B[Y])$ \cite[Section~9]{pan} and consider the composition:
$$
\xymatrix{
\GL_1(B[Y]) \ar@{^{(}->}[r] & \GL(B[Y]) \ar@{->>}[r] & K_1(B[Y]) \ar[r]^-{\sim} & K_1(B^{op}[Y]) \\ }
$$
This way we can assign an element $[f] \in K_1(Y, B^{op})$ to the morphism $f$.

\subsubsection*{Application}
Suppose that the representations $(\rho_{i_1}, \rho_{i_2})$ of the group $G$ (where notation is as in~\ref{defs}) act on a vector space $V_i$. Then each pair defines the map $\widetilde \rho_i \colon G/H \to \GL(V_i)$ described in~\ref{construction}:
$$\widetilde \rho_i \colon gH \mapsto \rho_{i_1}(g) \alpha_{i}^{-1} \rho_{i_2}(g)^{-1} \alpha_{i}, $$ 
where $\alpha_i$ satisfy $\rho_{i_1}(h) = \alpha_{i}^{-1} \rho_{i_2}(h) \alpha_{i}$ for every $h \in H$.

Consider the action of $\overline H$ on $\GL(V_i)$: $(h, \chi) \mapsto \rho_{i_1}(h) \chi  \rho_{i_1}(h)^{-1}$. Then $\widetilde \rho_i$ are $\overline H$-equivariant morphisms (with respect to the action by left translation of $\overline H$ on $G/H$). 

Denote $A_i = \End(V_i)$. Observe that representations $\rho_{i_1} \colon H \to \GL(V_i)$ are $\Ch$-homogeneous. Therefore we can twist $\GL(V_i) = \GL_{1, A_i}$ with a \linebreak 1-cocycle $\gamma \colon \Gal(F^{sep} / F) \to \overline{H}(F^{sep})$ (see~\ref{twist_alg}). We will write $A_{i, \gamma}$ for $A_{i, \gamma}^{\rho_{i_1}}$. Furthermore, we can twist with this cocycle $X = G/H$ and $\widetilde \rho_i$ (because of $\overline H$-equivariance of morphisms $\widetilde \rho_i$). We get the following objects:
$$
(G/H)_{\gamma};\, \, \GL(V_i)_{\gamma} = GL_{1, A_{i, \gamma}}; \, \, 
\widetilde \rho_{i, \gamma} \colon (G/H)_{\gamma} \to \GL_{1, A_{i, \gamma}},
$$
where $\widetilde \rho_{i, \gamma} \in \Mor_F(X_{\gamma}, \GL_{1, A_{i, \gamma}})$. 
To the morphism $\widetilde  \rho_{i, \gamma}$ we assign the element $[\widetilde \rho_{i, \gamma}] \in K_1(X_{\gamma},  A_{i, \gamma}^{op})$ the way described in the general construction. After fixing a cocycle $\gamma$ the corresponding elements of the K-theory will be denoted $[\widetilde t_1], \dots, [\widetilde t_{n-1}]$ in the case of the variety $(\SL_{2n}/\Sp_{2n})_{\gamma}$ and $[\widetilde s_1], [\widetilde s_2]$ in the case of the variety $(\E_6 / \F_4)_{\gamma}$.

Recall that we know the equivalence classes of the algebras $A_{i, \gamma}$ in the Brauer group from~\ref{cpas}. Since $K_1(Y, F^{op}) = K_1(Y)$ for every variety $Y$, we have:
\begin{gather*}
[\widetilde t_i] \in K_1((\SL_{2n}/\Sp_{2n})_{\gamma}, A_{\gamma}^{op}) \text{ if $i$ is odd,} \\
[\widetilde t_i] \in K_1((\SL_{2n}/\Sp_{2n})_{\gamma}) \text{ if $i$ even;}  \\
[\widetilde s_1], [\widetilde s_2] \in K_1((\E_6 / \F_4)_{\gamma}),
\end{gather*}
where $0 \leqslant i \leqslant n-1$, $V$ is a $2n$-dimensional $F$-vector space, $A = \End(V)$.

\section{Computation of K-theory of twisted forms}

Let $Y$ be an affine variety, $B_1, \dots, B_m$ central simple $F$-algebras and $\xi = (x_1, \dots, x_m)$ a set of elements such that ${x_i \in K_1(Y, B_i^{op})}$. For every subset $I = \{i_1 < \dots < i_q\} \subseteq \{1, \dots, m\}$ we denote $x_I = x_{i_1} \cup \dots \cup x_{i_q} \in K_{|I|}(Y, B_{i_1}^{op} \otimes \dots \otimes B_{i_q}^{op})$.

Define $B_I = B_{i_1} \otimes \dots \otimes B_{i_q}$ and consider the homomorphism of graded $K_*(F)$-modules:
$$
\Theta_{I, \xi} \colon K_{*-|I|}(B_I) \to K_*(Y); \quad \alpha \mapsto x_I \cup_{B_I} \alpha.
$$

We define the homomorphism $\Theta_{\xi}$ the following way:
$$
\Theta_{\xi} = \sum_I \Theta_{I, \xi} \colon \bigoplus_I K_{*-|I|}(B_I) \to K_*(Y),
$$
where $I$ runs through all subsets of the set $\{1, \dots, m\}$.

For the variety $X_{\gamma} = (G/H)_{\gamma}$ we take central simple algebras $B_i$ equal to $A_{i, \gamma} = \End(V_i)_{\gamma}^{\rho_{i_1}}, \, i=1, \dots, m$, where $V_i$ is the vector space which the representations $\rho_{i_1}$ and $\rho_{i_2}$ of the group $G$ act on. We consider the set of elements $\widetilde \rho = ([\widetilde \rho_1], \dots, [\widetilde \rho_m])$ where $[\widetilde \rho_i] \in K_1(X_{\gamma},  A_{i, \gamma}^{op})$ (see~\ref{elem_twist}). This way, we can define the homomorphism:
$$\Theta_{\widetilde \rho} \colon \bigoplus \limits_{I \subseteq \{1, \dots, m\}} \! \! \! \! \! K_{*-|I|}(B_I) \,\, \to \,\, K_*((G/H)_{\gamma}).
$$
Panin's splitting principle tells us~\cite[Theorem~1.1]{pan} that in order to prove that the homomorphism $\Theta_{\widetilde \rho}$ is an isomorphism, it is enough to check the following property.

\begin{prop}
\label{vazhnoe}
Let $F \subset E$ be any field extension such that cocycle $\gamma_{E}$ is a coboundary. Then the homomorphism $\Theta_{\widetilde \rho}$ after scalar extension up to the field $E$ becomes an isomorphism:
$$ \Theta_{\widetilde \rho}(E) \colon \bigoplus \limits_{I \subseteq \{1, \dots, m\}} \! \! \! \! \! 
K_{*-|I|}(B_I \otimes_{F} E) \,\,  \xrightarrow{\sim} \,\, K_*((G/H)_{\gamma} \times Spec \, E).
$$
\end{prop}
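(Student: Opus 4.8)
The plan is to exploit the hypothesis that $\gamma_E$ is a coboundary in order to identify, in a mutually compatible way, the twisted variety, the twisted algebras, and the twisted elements $[\widetilde\rho_{i,\gamma}]$ with their split counterparts over $E$, and then to invoke Theorem~\ref{th_card} with the base field $E$ in place of $F$. Write $\gamma_E(\sigma) = c^{-1}\,\sigma(c)$ for a suitable cochain $c \in \overline H(F^{sep})$. Translation by $c$ on $G/H$, conjugation by (a lift of) $c$ on each representation space $V_i$, and the induced maps on endomorphism algebras together produce isomorphisms over $E$, namely $(G/H)_{\gamma}\times_F E \xrightarrow{\ \sim\ } (G/H)\times_F E$ and $A_{i,\gamma}\otimes_F E \xrightarrow{\ \sim\ } \End_{E}(V_i\otimes_F E)$, which fit into a commutative square with $\widetilde\rho_{i,\gamma}$ and the base change of the split morphism $\widetilde\rho_i$ of Section~\ref{elem_twist}. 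Hence the first isomorphism carries $[\widetilde\rho_{i,\gamma}]$ to the image over $E$ of $[\widetilde\rho_i]$. In the split situation $\GL_{1,\End(V_i)} = \GL(V_i)$ and $K_1(X,\End(V_i)^{op}) = K_1(X)$, and the general construction of Section~\ref{elem_twist} reduces there to the construction of Section~\ref{construction}; thus $[\widetilde\rho_i] = \beta(\rho_{i_1}-\rho_{i_2}) = u_i$, i.e.\ $t_i$ (resp.\ $s_i$) from~\ref{elements}.

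Next comes Morita invariance. For $i$ even one has $B_i = F$; for $i$ odd $B_i = A_{\gamma}$, which splits over $E$ since $\gamma_E$ is trivial there; and in the $\E_6/\F_4$ case all $B_i$ are already trivial in $\Br(F)$ by~\ref{cpas}. Consequently each $B_I\otimes_F E$ is a matrix algebra over $E$, so there are isomorphisms $K_{*-|I|}(B_I\otimes_F E)\cong K_{*-|I|}(E)$, compatible with cup products, carrying $x_I\cup_{B_I}(-)$ to $t_I\cup(-)$ (resp.\ $s_I\cup(-)$).

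Assembling these identifications, $\Theta_{\widetilde\rho}(E)$ becomes precisely the homomorphism $\Theta_t$ (resp.\ $\Theta_s$) of Theorem~\ref{th_card}, but over the field $E$. Since every result used so far holds over an arbitrary base field, Theorem~\ref{th_card} applied over $E$ shows that this homomorphism is an isomorphism, which is the assertion of Proposition~\ref{vazhnoe}; combined with Panin's splitting principle~\cite[Theorem~1.1]{pan} this gives the computation of $K_*(X_{\gamma})$ itself.

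I expect the main obstacle to be the bookkeeping in the first step: verifying that the trivialising cochain $c$ induces mutually compatible isomorphisms of $G/H$, of each $\End(V_i)$, and of the $\overline H$-equivariant morphisms $\widetilde\rho_i$, and that this compatibility survives the passage $\GL_1(B[Y])\hookrightarrow\GL(B[Y])\twoheadrightarrow K_1(B[Y])\xrightarrow{\sim}K_1(B^{op}[Y])$, so that the twisted class $[\widetilde\rho_{i,\gamma}]$ genuinely maps to the split generator over $E$. One must also check the cup-product/Morita compatibility invoked in the second step. These are naturality statements about the Tits construction and the $\beta$-construction; they require no new idea, but the diagrams must be drawn with care.
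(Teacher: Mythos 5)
Your proposal is correct and follows essentially the same route as the paper: use the triviality of $\gamma_E$ to identify the twisted variety, the algebras $A_{i,\gamma}\otimes E$ (via Morita invariance), and the classes $[\widetilde\rho_{i,\gamma}]$ with their split counterparts over $E$, and then conclude by Theorem~\ref{th_card} applied over the base field $E$. The paper states these identifications without spelling out the compatibility bookkeeping you flag; your added care there is a refinement, not a divergence.
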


\begin{proof}
Since $\gamma_{E}$ is trivial, all the twistings trivialize over the field $E$:
\begin{gather*} 
(G/H)_{\gamma} \times Spec \, E \simeq (G/H)_{E}, \\
A_{i, \gamma} \otimes E \simeq A_i \otimes E \sim E \text{ (equivalence in $\Br(E)$)}, \\
[\widetilde \rho_i] \otimes E = t_{i, E} \text{ if $G/H = \SL_{2n}/\Sp_{2n}$}, \\
[\widetilde \rho_i] \otimes E = s_{i, E} \text{ if $G/H = \E_6 / \F_4$},
\end{gather*}
where $t_i$ and $s_i$ are defined the same way as in Theorem~\ref{th_card}.
We see that the homomorphism $\Theta_{\widetilde \rho}(E) $ in case of every considered variety $G/H$ coincides with the corresponding isomorphism from Theorem~\ref{th_card} after scalar extension up to the field $E$.
\end{proof}

Therefore for varieties $\SL_{2n, \gamma} / \Sp_{2n, \gamma}$ as well as for varieties $(\E_6 / \F_4)_{\gamma}$ the homomorphism $\Theta_{\widetilde \rho}$ is an isomorphism. It implies the final result.
\begin{tm}
Assume $\chr(F) \ne 2$. Let $\gamma \colon \Gal(F^{sep} / F) \to (\Sp_{2n} / \mu_2)(F^{sep})$ be a 1-cocylce. Let $\widetilde t = ([\widetilde t_1], \dots, [\widetilde t_{n-1}])$ be the set of elements defined in~\ref{elem_twist}, $A = \End(V)$ where $V$ is a $2n$-dimensional vector space over $F$, and let $\tau$ be the standard symplectic involution on $A$. Denote $B_i$ the central simple algebra $A_{\gamma}$ for $i$ odd, and $F$ for $i$ even $(1 \leqslant i \leqslant n-1)$.
 Then the homomor\-phism $\Theta_{\widetilde t}$ is an isomorphism of graded $K_*(F)$-modules:
$$
\Theta_{\widetilde t} \colon \bigoplus_{I \subseteq \{1, \dots, n-1\}} \! \! \! \! \! K_{*-|I|}(B_I) \,\, \xrightarrow{\sim} \,\, K_*((\SL_{2n}/\Sp_{2n})_{\gamma}) = K_*(\SL_{1, A_{\gamma}} / \Sp(A_{\gamma}, \tau_{\gamma}) ).
$$
Let $\delta \colon \Gal(F^{sep} / F) \to \F_4(F^{sep})$ be a 1-cocycle. Let $\widetilde s = ([\widetilde s_1], [\widetilde s_2])$ be the set of elements defined in~\ref{elem_twist}. Then the homomorphism $\Theta_{\widetilde s}$ is an isomorphism of graded $K_*(F)$-modules:
$$
\Theta_{\widetilde s} \colon \bigoplus_{I \subseteq \{1, 2\}} \! \! \! K_{*-|I|}(F) \,\, \xrightarrow{\sim} \,\, K_*((\E_6 / \F_4)_{\delta}).
$$
\end{tm}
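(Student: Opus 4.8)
The plan is to invoke Panin's splitting principle and thereby reduce the computation of $K_*((\SL_{2n}/\Sp_{2n})_{\gamma})$ and $K_*((\E_6/\F_4)_{\delta})$ to the split case already settled in Theorem~\ref{th_card}. Concretely, the homomorphism $\Theta_{\widetilde t}$ (resp.\ $\Theta_{\widetilde s}$) is assembled from the twisted classes $[\widetilde t_i]$ (resp.\ $[\widetilde s_i]$) constructed in Section~\ref{elem_twist} via cup products $x_I \cup_{B_I} \alpha$, where $B_I = B_{i_1} \otimes \dots \otimes B_{i_q}$; by the computation of the relevant central simple algebras in~\ref{cpas}, each $B_i$ is Brauer-equivalent to $A_\gamma$ for $i$ odd and to $F$ for $i$ even (resp.\ to $F$ in every case for $\F_4$, whose center is trivial). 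So the target modules are exactly those appearing in the statement.

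The main step is Proposition~\ref{vazhnoe}: one checks that after base change to any field extension $E/F$ over which the cocycle $\gamma$ (resp.\ $\delta$) becomes a coboundary, the homomorphism $\Theta_{\widetilde t}(E)$ (resp.\ $\Theta_{\widetilde s}(E)$) is an isomorphism. Over such an $E$ all twistings trivialize simultaneously: the variety $(G/H)_\gamma \times \Spec E$ becomes $(G/H)_E$, the algebras $A_{i,\gamma} \otimes_F E$ become split, and the elements $[\widetilde \rho_i] \otimes E$ become precisely the split generators $t_{i,E}$ or $s_{i,E}$ of Section~\ref{elements}. Hence $\Theta_{\widetilde t}(E)$ literally coincides with $\Theta_t$ of Theorem~\ref{th_card} after scalar extension to $E$, and Theorem~\ref{th_card} tells us the latter is an isomorphism (the isomorphism is compatible with base change since it is built from the cup-product pairing and the generators $u_i = \beta(\rho_{i_1}-\rho_{i_2})$, which are defined functorially). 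The same argument applies verbatim to $\F_4$ with $\delta$ in place of $\gamma$.

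Once Proposition~\ref{vazhnoe} is established, Panin's splitting principle~\cite[Theorem~1.1]{pan} applies directly: since $\Theta_{\widetilde t}$ and $\Theta_{\widetilde s}$ are morphisms of the appropriate type of presheaves (sums of $K$-theories with coefficients in twisted central simple algebras, these being the building blocks the splitting principle controls) and they become isomorphisms after base change to every field over which the cocycle splits, they are isomorphisms over $F$ itself. This yields the claimed isomorphisms of graded $K_*(F)$-modules, and in the $\SL_{2n}$ case the identification $(\SL_{2n}/\Sp_{2n})_\gamma \simeq \SL_{1,A_\gamma}/\Sp(A_\gamma,\tau_\gamma)$ from the Remark in Section~\ref{twist_alg} lets us rewrite the target.

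The part requiring the most care is verifying the hypotheses of Panin's splitting principle, i.e.\ that $\Theta_{\widetilde t}$ and $\Theta_{\widetilde s}$ are morphisms of functors of the precise shape to which~\cite[Theorem~1.1]{pan} applies, and that the trivialization of all the data over $E$ is compatible (the variety, the algebras $A_{i,\gamma}$, and the morphisms $\widetilde\rho_{i,\gamma}$ must all trivialize by the \emph{same} coboundary, which is guaranteed because $\widetilde\rho_i$ is $\overline H$-equivariant, as noted in Section~\ref{elem_twist}). Everything else is bookkeeping: tracking which subsets $I$ contribute a factor of $A_\gamma$ versus $F$, and matching cup products before and after twisting. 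The genuinely substantive input—that the split $\Theta_t,\Theta_s$ are isomorphisms—is already in hand from Theorem~\ref{th_card}.
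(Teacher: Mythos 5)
Your proposal is correct and follows essentially the same route as the paper: establish Proposition~\ref{vazhnoe} by observing that over any splitting field $E$ all the twisted data (variety, algebras $A_{i,\gamma}$, classes $[\widetilde\rho_i]$) trivialize simultaneously so that $\Theta_{\widetilde t}(E)$ and $\Theta_{\widetilde s}(E)$ coincide with the split isomorphisms of Theorem~\ref{th_card}, then conclude by Panin's splitting principle. Your additional remarks on verifying the hypotheses of the splitting principle and on the compatibility of the trivializations are sensible elaborations of what the paper leaves implicit, but they do not change the argument.
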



\begin{thebibliography}{99}


\bibitem{an} 
\by Ananyevskiy~A. 
\paper On the Algebraic K-Theory of Some Homogeneous Varieties
\jour Documenta Math. 
\vol 17
\yr 2012 
\pages 167--193






\bibitem{lev} 
\by Levine~M.
\paper The algebraic K -theory of the classical groups and some twisted forms
\jour Duke Math.~J.
\vol 70
\yr 1993 
\issue 2
\pages 405--443


\bibitem{kay}
\by McKay~W.\,G., Patera~J. 
\book Tables of dimensions, indices, and branching rules for representations of simple Lie algebras 
\publ M.~Dekker 
\yr 1981


\bibitem{mer} 
\by Merkurjev~A.
\paper Comparison of the equivariant and the standard K-theory of algebraic varieties
\jour St. Petersburg Math. J.
\vol 9
\yr 1998
\issue 4
\pages 815-850

\bibitem{merk}
\by Merkurjev~A.
\paper Equivariant K-theory
\jour J.~Handbook of K-theory
\vol 1
\yr 2005 
\issue 2
\publ Springer
\publaddr Berlin 
\pages 925--954


\bibitem{mi} 
\by Minami~H.
\paper K-groups of symmetric spaces I
\jour Osaka J. Math.
\vol 12 
\yr 1975
\pages 623--634


\bibitem{panin} 
\by Panin~I.
\paper On the algebraic K-theory of twisted flag varieties
\jour K-Theory
\vol 8
\yr 1994 
\pages 541--585

\bibitem{pan} 
\by Panin~I.
\paper Splitting principle and K-theory of simply connected semisimple algebraic groups
\jour St. Petersburg Math. J.
\vol 10
\yr 1999
\issue 1
\pages 68-101



\bibitem{quil}
\by Quillen~D.
\paper Higher algebraic K-theory I
\jour Lect. Notes in Math.
\vol 341
\yr 1972
\pages 85--147


\bibitem{rich}
\by Richardson~R.\,W.
\paper Affine coset spaces of reductive algebraic groups
\jour Bull. London Math. Soc.
\vol 9
\yr 1977
\issue 1
\pages 38--41


\bibitem{swan}
\by Swan~R.
\paper K-theory of quadric hypersurfaces
\jour Ann. Math.
\vol 121
\yr 1985
\pages 113--153

\bibitem{tits} 
\by Tits~J.
\paper Repr\' esentations lin\' eaires irr\' eductibles d'un groupe r\' eductif sur un corps quelconque
\jour J.~Reine Angew. Math.
\vol 247
\yr 1971 
\pages 196--220


\end{thebibliography}
\end{document}